\documentclass[11pt]{amsart}

\usepackage[margin=1.12in]{geometry}
\usepackage{microtype}
\usepackage{mathtools,amssymb,amsfonts}
\usepackage{enumitem}
\usepackage{xcolor}
\usepackage[colorlinks=true,linkcolor=blue!55!black,citecolor=green!35!black,urlcolor=blue!60!black]{hyperref}
\usepackage[capitalise,noabbrev]{cleveref}

\newtheorem{theorem}{Theorem}
\newtheorem{proposition}[theorem]{Proposition}
\newtheorem{lemma}[theorem]{Lemma}

\newtheorem{markloflemma}{Lemma}[section]
\crefname{markloflemma}{Lemma}{Lemmas}
\Crefname{markloflemma}{Lemma}{Lemmas}
\theoremstyle{definition}

\newtheorem{marklofdefinition}[markloflemma]{Definition}
\crefname{marklofdefinition}{Definition}{Definitions}
\Crefname{marklofdefinition}{Definition}{Definitions}
\theoremstyle{remark}
\newtheorem{remark}[theorem]{Remark}

\newcommand{\Z}{\mathbb Z}
\newcommand{\Q}{\mathbb Q}
\newcommand{\R}{\mathbb R}
\newcommand{\C}{\mathbb C}
\newcommand{\cC}{\mathcal C}
\newcommand{\one}{\mathbf 1}
\newcommand{\e}{\mathrm e}
\DeclareMathOperator{\vol}{vol}
\title[Berry--Tabor statistics for the Sutherland model]
{Poissonian pair correlation for the three-particle Sutherland model}
\author{Christopher Lutsko}
\date{}
\subjclass[2020]{Primary 81Q50; Secondary 11F27, 37A17}
\keywords{Sutherland model, Berry--Tabor conjecture, Poissonian pair
correlation, theta functions, affine horocycles}

\begin{document}

\begin{abstract}
We prove a Berry--Tabor theorem for the repulsive three-particle trigonometric
Sutherland Hamiltonian.  For fixed total momentum, its relative spectrum is an
affine translate of the positive-definite $A_2$-form.  For a Diophantine
coupling, the desymmetrized spectrum has Poissonian pair correlation.  The proof
reduces the problem to a finite-congruence and Weyl-sector version of Marklof's
theorem on inhomogeneous quadratic forms. 
\end{abstract}

\maketitle

\section{Introduction}

In 1977 Berry and Tabor \cite{BerryTabor1977} conjectured that a generic integrable quantum system has
Poissonian local spectral statistics once its exact symmetries have been removed.
We prove this at the level of pair correlation for a
natural interacting model: the three-particle trigonometric Sutherland
Hamiltonian with interaction coupling $g>1$,
\begin{equation}\label{eq:hamiltonian-intro}
 H_g=-\frac12\sum_{j=1}^3\frac{\partial^2}{\partial x_j^2}
 +\frac{g(g-1)}4\sum_{1\leq i<j\leq3}
 \frac1{\sin^2((x_i-x_j)/2)}.
\end{equation}
This describes three particles on a circle with repulsive inverse-square chord
interaction.  The model is quantum integrable and exactly solvable: its
quasimomenta form an affine translate of the dominant weight lattice, and its
eigenfunctions can be expressed in terms of Jack polynomials
\cite{Sutherland1971,Sutherland1972,OlshanetskyPerelomov1983,
BakerForrester1997,Hallnas2024}.  An explicit treatment of the three-particle
system, including formulas for the corresponding Jack-polynomial
eigenfunctions, is given by Mangazeev \cite{Mangazeev2007}.

Fixing the total momentum $K\in\Z$ and subtracting the center-of-mass energy
$K^2/6$, the relative spectrum is
\begin{equation}\label{eq:intro-form}
 E_{p,q}:=\frac13\bigl((p+g)^2+(p+g)(q+g)+(q+g)^2\bigr),
 \qquad p,q\in\Z_{\geq0},\quad p+2q\equiv K\pmod3.
\end{equation}
This is the three-particle specialization of the standard Sutherland
quasimomentum formula; see \cite[Section~4.2]{Hallnas2024}. 
Thus the interaction parameter is a shift of the positive-definite $A_2$-form.
The only remaining issues are the congruence condition, the Weyl symmetries (taking $p,q\ge0$ restricts to one chamber), and the
reflection symmetry when $K\equiv0\pmod3$.

We remove this last symmetry at the outset.  Put
\[
 \mathcal P_K:=
 \begin{cases}
 \{(p,q)\in\Z_{\geq0}^2:p+2q\equiv K\pmod3\},
       &K\not\equiv0\pmod3,\\
 \{(p,q)\in\Z_{\geq0}^2:p+2q\equiv0\pmod3,\ p\geq q\},
       &K\equiv0\pmod3,
 \end{cases}
\]
and let $\mathcal E_{K,g}:=\{E_{p,q}:(p,q)\in\mathcal P_K\}$.  Its mean
density is
\[
 \rho_K:=\begin{cases}
 \displaystyle\frac{\pi}{3\sqrt3},&K\not\equiv0\pmod3,\\[6pt]
 \displaystyle\frac{\pi}{6\sqrt3},&K\equiv0\pmod3.
 \end{cases}
\]
Write
\[
 N_{K,g}(X):=\#\{E\in\mathcal E_{K,g}:E\leq X\},
\]
and, for $f\in C_c^\infty(\R)$, define the pair correlation
\[
 R_{K,g}(X,f):=\frac1{N_{K,g}(X)}
 \sum_{\substack{E,E'\in\mathcal E_{K,g}\\E,E'\leq X}}^\ast
 f\bigl(\rho_K(E-E')\bigr),
\]
where the $\ast$ denotes that $E\neq E'$

For every irrational $g>1$ and every $K\in\Z$, the following Weyl law
applies:
\begin{equation}\label{eq:intro-weyl}
 N_{K,g}(X)=\rho_KX+O_{g,K}(X^{1/2}).
\end{equation}
Call $g\in\R$ \emph{Diophantine of finite type} if there are $c>0$ and
$\kappa<\infty$ such that
\begin{equation}\label{eq:diophantine}
 \left|g-\frac ab\right|\geq c b^{-\kappa}
 \qquad(a\in\Z,\ b\in\Z_{\geq1}).
\end{equation}
This includes every algebraic irrational and almost every real number. 
Some Diophantine restriction is necessary in general: without
it pair correlation can fail along subsequences
\cite[Theorem~1.13]{Marklof2003}; see also the higher-dimensional correction
\cite{MarklofCorrection2003}.

\begin{theorem}
\label{thm:main}
Let $g>1$ be Diophantine of finite type, and let $K\in\Z$.
Then
\[
 \lim_{X\to\infty}R_{K,g}(X,f)=\int_\R f(x)\,dx
\]
for every $f\in C_c^\infty(\R)$.  In particular, $\mathcal E_{K,g}$ has
Poissonian pair correlation.
\end{theorem}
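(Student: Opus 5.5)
We follow Marklof's strategy for pair correlation of values of inhomogeneous quadratic forms $(m-\alpha)^TQ(m-\alpha)$ with $m\in\Z^2$. The proof reduces Theorem~\ref{thm:main} to an equidistribution statement for translates of an affine horocycle (a theta-sum cusp) on a finite cover of $\Gamma\backslash(\mathrm{SL}(2,\R)\ltimes\R^{4})$.

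\emph{Step 1: remove the congruence and Weyl-chamber restrictions.} Write $v=(p+g,q+g)$, so $E_{p,q}=\frac13 Q(v)$ with $Q$ the $A_2$-form. The condition $p+2q\equiv K\pmod 3$ says that $(p,q)$ ranges over a coset $(p_0,q_0)+\Lambda$ of the index-$3$ sublattice $\Lambda=\{(p,q):p+2q\equiv0\pmod 3\}$. Choosing a basis of $\Lambda$ turns $E_{p,q}$ into $Q'(m-\alpha)$ with $m\in\Z^2$, a positive definite form $Q'$ (a rescaled $A_2$-form), and a shift $\alpha=\alpha_{K,g}$ whose coordinates are affine in $g$ with rational coefficients. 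The region $p,q\ge0$, and additionally $p\ge q$ when $3\mid K$, becomes a sector $\cC$ bounded by rational lines. We then write $R_{K,g}(X,f)$ as a pair sum over $m,m'\in\Z^2$ with $m-\alpha,\,m'-\alpha\in\cC$ and $Q'(m-\alpha),Q'(m'-\alpha)\le X$.

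There are two ways to handle the sector. (a) Unfold it using the Weyl group $W\cong S_3$ together with the reflection $p\leftrightarrow q$. The images of $\cC$ tile the plane, and the energies are invariant because $Q$ is $W$-invariant for the dot action around $-g\rho$. Pairs from different chambers, or images of the same level under $W$, then contribute exact coincidences. These must be subtracted, and they are what produce the densities $\rho_K$ rather than the full-plane density. (b) Insert the characteristic function of $\cC$ directly as the cutoff $\chi$ in Marklof's theta-sum. Since Marklof's theorem allows general Riemann-integrable radial-angular cutoffs $\chi(m-\alpha)$ in place of balls, I would use (b). I would also check that the two contributions agree; here irrationality of $g$ ensures that $m-\alpha$ never lies on the walls.

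\emph{Step 2: Weyl law and diagonal terms.} Equation~\eqref{eq:intro-weyl} follows from lattice-point counting in an ellipse sector, since the area of $\{Q\le 3X\}\cap\cC$ divided by $\operatorname{covol}\Lambda=3$ equals $\rho_K X$. Next we must show that no off-diagonal coincidences $E=E'$ with $(p,q)\ne(p',q')$ survive the desymmetrization. Expanding $E_{p,q}-E_{p',q'}$ gives a rational quadratic part plus $g$ times an integer linear form. Hence coincidences force $(2p+q)-(2p'+q')+(p+2q)-(p'+2q')=0$, i.e.\ $p+q=p'+q'$, and then $Q(p,q)=Q(p',q')$. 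This leaves only $(p',q')=(q,p)$, which is excluded by the congruence when $3\nmid K$ and by $p\ge q$ when $3\mid K$. So the $\ast$-sum matches the generic off-diagonal sum.

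\emph{Step 3: apply Marklof's theorem and the main obstacle.} Marklof \cite{Marklof2003} proves Poissonian pair correlation for $Q'(m-\alpha)$ whenever $\alpha$ is Diophantine of finite type in his sense. That sense is: there is no rational relation, up to controlled approximation, between $(\alpha,1)$ and the rational structure of $Q'$. Here $\alpha=a+gb$ with $a,b\in\Q^2$, so the vector $\alpha$ is highly non-generic, namely a rational line in $\R^2$. The key step is to re-run Marklof's equidistribution argument. It rests on Ratner's theorem and its quantitative non-divergence via the Diophantine condition, applied to a curve $\{(\cdot,\alpha)\}$ that lies in a rational subvariety. The work is to show that the limiting measure is still Haar on the relevant torus fibre $\mathrm{SL}(2,\R)\ltimes(\R^2)^2$, modulo the finite-index congruence subgroup $\Gamma(3)$-type arising from the coset.

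I expect the main obstacle to be the classification of the possible limiting measures (Ratner plus the Shah-type argument). Specifically, one must exclude that the orbit closure of $(1,(\alpha,0))$ under the relevant horocycle lies in a proper intermediate subgroup. Because $\alpha=a+gb$, such a subgroup would have to correspond to a rational relation $\langle n,\alpha\rangle\in\Q$ with $n\in\Z^2\setminus\{0\}$. This reduces to $g\langle n,b\rangle\in\Q-\langle n,a\rangle$, which forces $\langle n,b\rangle=0$. I would check explicitly that $b$ (the image of $(1,1)$) is not orthogonal to any sublattice direction that obstructs equidistribution for this quadratic form. Then I would use the finite-type condition~\eqref{eq:diophantine} to control escape of mass in the cusp, exactly as in Marklof's Section~7, with the constants depending on $3$ and $c,\kappa$. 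Combining the limit with Step~2 and the normalization by $N_{K,g}(X)\sim\rho_KX$ yields $\int f$.
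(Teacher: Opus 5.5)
\textbf{Genuine gap in Step 3, at the point you flag as the main obstacle.} You propose to show that the limiting measure is Haar on the full quotient of $\mathrm{SL}(2,\R)\ltimes(\R^2)^2$ by ruling out a relation $\langle n,\alpha\rangle\in\Q$ with $n\in\Z^2\setminus\{0\}$. This cannot be ruled out. Since $b\in\Q^2\setminus\{0\}$, there is always a nonzero integer vector $n\perp b$, and then $\langle n,\alpha\rangle=\langle n,a\rangle\in\Q$. So the horocycle \emph{always} lies in a proper intermediate subgroup.

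In the paper's coordinates $(u,v)=(p+q,p-q)$ this is explicit. One has $12\xi_g=(z+\boldsymbol\alpha)^tA(z+\boldsymbol\alpha)$ with $\boldsymbol\alpha=(2g,0)$. The second column of the translation part is rational (zero), so the horocycle stays in $H\cong\mathrm{SL}(2,\R)\ltimes\R^2$ and equidistributes only on $Y=\Gamma_H\backslash H$. On $Y$ the Haar mean of $\Theta_{f_1}\overline{\Theta_{f_2}}$ is not just $\langle f_1,f_2\rangle$. Integrating over the compact translation variable forces $\ell(m)=\ell(n)$, and integrating over the modular variable forces $P_0(m)=P_0(n)$. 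Hence every bulk collision contributes, and the reflection $(p,q)\leftrightarrow(q,p)$, i.e.\ $(u,v)\mapsto(u,-v)$, is one. Had your full-space claim held, it would force Poisson statistics for the undesymmetrized $K\equiv0$ label set too. There the reflection pairs visibly add an extra $\widehat h(0)$ term, so the claim is false, not merely unproved.

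\textbf{What is missing.} The missing step is the computation the paper carries out:
\begin{enumerate}[label=(\roman*)]
\item prove equidistribution and non-escape on $Y$; this is where the finite type of $2g$ enters, via Marklof's cusp majorant;
\item identify the limiting mean as the diagonal plus the bulk-collision terms;
\item show these extra terms vanish on the desymmetrized spectrum.
\end{enumerate}
For $K\not\equiv0\pmod3$, the reflection sends the coset $v\equiv K$ to $v\equiv-K$, so there is no nontrivial bulk collision at all. For $K\equiv0$, one works with smooth angular weights supported inside $\{p>q\}$, whose reflected supports are disjoint from the originals, and controls the strip along the wall $p=q$ separately. Your Step~2 arithmetic is exactly the right input: for irrational $g$, exact coincidences are the same as bulk collisions, namely the reflection. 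But it must be used in identifying the limiting mean, not only to match the $\ast$-sum with the label sum.

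\textbf{Two smaller errors.} First, the claim that $m-\alpha$ never lies on the walls is false for the wall $p=q$. That wall is precisely where the reflection collision graph meets the sector. Second, option (a) does not work. Among the isometries of the $A_2$-form, only the identity and $p\leftrightarrow q$ preserve the affine lattice $\Z^2+(g,g)$. So the Weyl images of the chamber are different affine lattices, not a tiling of a single coset.
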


\begin{remark}
For the $N$-particle model, after fixing the total momentum, the relative
energies come from a shifted positive-definite $(N-1)$-ary quadratic form.
For $N\geq4$ this creates additional collisions.
\end{remark}

There is a substantial physics literature on many-particle analogues of the
Berry--Tabor conjecture.  Poisson statistics have been observed numerically in
integrable Heisenberg, $t$--$J$, and Hubbard models
\cite{PoilblancEtAl1993}.  Noninteracting many-particle spectra were studied in
\cite{MunozEtAl2006}.  For interacting bosons, the level statistics of the
Lieb--Liniger model were investigated analytically and numerically in
\cite{SekkouriIzrailevBorgonovi2021}, and an $N$-particle Berry--Tabor trace
formula was derived in \cite{UrbinaKellyRichter2023}.  That said, these works do not give
a rigorous Poissonian pair-correlation limit for a deterministic interacting
Hamiltonian. To our knowledge, \cref{thm:main} is the first rigorous proof of Poissonian pair correlation for a deterministic quantum Hamiltonian with multiple interacting particles.

Rigorous work on Berry--Tabor statistics
has largely concerned free particles on flat tori and boxes.  Sarnak proved
pair correlation for almost every positive binary quadratic form
\cite{Sarnak1997}; Eskin, Margulis and Mozes obtained fixed-form results under
explicit Diophantine hypotheses \cite{EskinMargulisMozes2005}.  Marklof proved
Poissonian pair correlation for shifted lattice norms using theta functions and
Ratner theory \cite{Marklof2002,Marklof2003}. Effective refinements were developed by
Str\"ombergsson--Vishe and Lindenstrauss--Mohammadi--Wang
\cite{StrombergssonVishe2020,LindenstraussMohammadiWang2023}.  In dimension
three, Kim, Marklof and Welsh recently proved the pair-correlation conjecture
for a Diophantine family of boxes \cite{KimMarklofWelsh2026}. 

The closest input to our argument is Marklof's inhomogeneous theorem,
particularly its symmetry-reduced cases \cite[Appendix A]{Marklof2003}.  In what follows, we pass from the square lattice to a rational $A_2$-form, a finite-index
coset, and a sharp Weyl sector.  The first two are handled by the finite Weil
representation, and the sector by smooth approximation and nonescape of mass.
The weighted theta mean-value statement used below is proved in
\cite[Proposition~8.5]{GriffinMarklof2019}.

\section{Exact spectrum and center-of-mass reduction}\label{sec:spectrum}

We work in the bosonic Hilbert space for three particles on the circle. Fix labels
\begin{equation}\label{eq:lambda}
 \lambda=(\lambda_1,\lambda_2,\lambda_3),\qquad
 \lambda_1\geq\lambda_2\geq\lambda_3,
\end{equation}
then the quasimomenta are given by
\[
 \kappa_i=\lambda_i+\frac g2(4-2i),\qquad
 (\kappa_1,\kappa_2,\kappa_3)
 =(\lambda_1+g,\lambda_2,\lambda_3-g).
\]
The energy and total momentum are
\[
 E_\lambda=\frac12\sum_{i=1}^3\kappa_i^2,
 \qquad K_\lambda=\sum_{i=1}^3\kappa_i
 =\lambda_1+\lambda_2+\lambda_3.
\]
One often indexes Jack polynomials by nonnegative partitions
\cite[Section 2]{Mangazeev2007}.

For fixed $K$ and $\lambda$, we write
\[
 p=\lambda_1-\lambda_2,\qquad q=\lambda_2-\lambda_3,
\]
so $p,q\geq0$.  The total-momentum condition is
\begin{equation}\label{eq:congruence}
 K=3k+p+2q,\qquad p+2q\equiv K\pmod3,
\end{equation}
which gives
\[
 \lambda=(k+p+q,k+q,k).
\]
The following lemma is the standard quasimomentum formula for the Sutherland
model \cite[Section~2]{Mangazeev2007}.

\begin{lemma}\label{lem:relative-spectrum}
For fixed $K$, the energy after subtracting $K^2/6$ is
\begin{equation}\label{eq:relative-energy}
 E^{\mathrm{rel}}_{p,q}
 =E_\lambda-\frac{K^2}{6}
 =\frac13Q_g(p,q),
\end{equation}
where
\begin{align*}
 Q_g(p,q)
 &=(p+g)^2+(p+g)(q+g)+(q+g)^2\\
 &=p^2+pq+q^2+3g(p+q)+3g^2.
\end{align*}
The allowed labels are exactly
\begin{equation}\label{eq:labelset}
 \mathcal L_K=\{(p,q)\in\Z_{\geq0}^2:p+2q\equiv K\pmod3\}.
\end{equation}
\end{lemma}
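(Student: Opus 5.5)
The identity is a short computation with the quasimomenta. I would use the elementary fact that for any three reals
\[
 \sum_{i=1}^3\kappa_i^2-\frac13\Bigl(\sum_{i=1}^3\kappa_i\Bigr)^2
 =\frac13\sum_{1\le i<j\le3}(\kappa_i-\kappa_j)^2 ,
\]
which expresses the energy relative to the center of mass through the differences of the quasimomenta.

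\emph{Step 1: the total momentum.} With $\lambda=(k+p+q,k+q,k)$ we have $(\kappa_1,\kappa_2,\kappa_3)=(k+p+q+g,\ k+q,\ k-g)$. The $g$-shifts $+g,0,-g$ cancel in the sum, so $K_\lambda=\sum_i\kappa_i=3k+p+2q=K$, consistent with \eqref{eq:congruence}.

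\emph{Step 2: the energy.} Applying the identity above,
\[
 E_\lambda-\frac{K^2}{6}=\frac12\Bigl(\sum_i\kappa_i^2-\frac{K^2}{3}\Bigr)
 =\frac16\sum_{i<j}(\kappa_i-\kappa_j)^2 .
\]
Put $a:=\kappa_1-\kappa_2=p+g$ and $b:=\kappa_2-\kappa_3=q+g$, so that $\kappa_1-\kappa_3=a+b$. Then
\[
 a^2+b^2+(a+b)^2=2(a^2+ab+b^2),
\]
and hence
\[
 E_\lambda-\frac{K^2}{6}=\frac13(a^2+ab+b^2)=\frac13Q_g(p,q).
\]
Expanding $(p+g)^2+(p+g)(q+g)+(q+g)^2$ gives the second expression for $Q_g$ in the statement.

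\emph{Step 3: the label set.} The map $\lambda\mapsto(p,q,k)=(\lambda_1-\lambda_2,\ \lambda_2-\lambda_3,\ \lambda_3)$ is a bijection from integer triples with $\lambda_1\ge\lambda_2\ge\lambda_3$ onto $\Z_{\ge0}^2\times\Z$. Its inverse is $\lambda=(k+p+q,k+q,k)$.

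Fixing $K_\lambda=K$ imposes $3k=K-p-2q$. For a given pair $(p,q)\in\Z_{\ge0}^2$, this has an integer solution $k$ if and only if $p+2q\equiv K\pmod 3$. When it has one, the solution is unique.

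So the states with total momentum $K$ correspond bijectively to $\mathcal L_K$. Since the energy depends only on $(p,q)$ by Step 2, the relative spectrum is as claimed.

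The only point needing care is that, on the circle, $\lambda_3$ ranges over all of $\Z$ (not just $\lambda_3\ge0$). Jack polynomials indexed by nonnegative partitions cover every $K$ after multiplication by powers of $x_1x_2x_3$. This is why $k\in\Z$ is unconstrained and no condition besides the congruence appears. I do not expect any real obstacle.
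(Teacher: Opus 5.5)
Your proposal is correct and follows the paper's proof essentially line by line. It uses the same center-of-mass identity applied to the quasimomenta, the same differences $p+g$, $q+g$, $p+q+2g$ giving $2Q_g(p,q)$, and the same inversion $k=(K-p-2q)/3$ for the label set; your remark that $\lambda_3$ ranges over all of $\Z$ just makes explicit what the paper means by ``weakly decreasing integral triple.''
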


\begin{proof}
For any $(a_1,a_2,a_3)\in\R^3$ one has
\[
 \sum_{i=1}^3a_i^2-\frac13\Bigl(\sum_{i=1}^3a_i\Bigr)^2
 =\frac13\sum_{1\leq i<j\leq3}(a_i-a_j)^2.
\]
Apply this identity with $a_i=\kappa_i$ and use
$E_\lambda=\frac12\sum_i\kappa_i^2$ and $\sum_i\kappa_i=K$.  We obtain
\[
 E_\lambda-\frac{K^2}{6}
 =\frac16\bigl((\kappa_1-\kappa_2)^2
 +(\kappa_1-\kappa_3)^2+(\kappa_2-\kappa_3)^2\bigr).
\]
The definitions of $p$ and $q$ give
\[
 \kappa_1-\kappa_2=p+g,\qquad
 \kappa_2-\kappa_3=q+g,\qquad
 \kappa_1-\kappa_3=p+q+2g.
\]
Consequently
\begin{align*}
 6\Bigl(E_\lambda-\frac{K^2}{6}\Bigr)
 &=(p+g)^2+(q+g)^2+(p+q+2g)^2\\
 &=2\{p^2+pq+q^2+3g(p+q)+3g^2\}=2Q_g(p,q),
\end{align*}
which proves \eqref{eq:relative-energy}.

If $\lambda$ has momentum $K$, then \eqref{eq:congruence} implies
$p+2q\equiv K\pmod3$.  Conversely, given $p,q\geq0$ satisfying this
congruence, the integer
\[
 k=\frac{K-p-2q}{3}
\]
defines a weakly decreasing integral triple
$\lambda=(k+p+q,k+q,k)$ of momentum $K$.  These two constructions are
inverse to one another, proving \eqref{eq:labelset}.
\end{proof}

Next, the change of variables
\begin{equation}\label{eq:uv}
 u=p+q,\qquad v=p-q
\end{equation}
puts the form in the especially useful shape
\[
 4Q_g(p,q)=3(u+2g)^2+v^2.
\]
The image of $\mathcal L_K$ is the finite-index lattice coset
\begin{equation}\label{eq:uv-lattice}
 u\equiv v\pmod2,\qquad v\equiv K\pmod3,\qquad u\geq|v|.
\end{equation}
The first two conditions are a parity sublattice and a residue class; the last
condition is the closed $A_2$ Weyl chamber.

\section{Arithmetic: multiplicities and mean density}\label{sec:arithmetic}

Let $Q_0(p,q)=p^2+pq+q^2$.  The next lemma states that irrationality of $g$ and working in the positive Weyl chamber, the only remaining symmetry to be removed is the reflection when $K\equiv 0$.

\begin{lemma}\label{lem:collision}
If $g$ is irrational and
$Q_g(p,q)=Q_g(r,s)$ for nonnegative integral labels, then
\[
 (r,s)=(p,q)\quad\hbox{or}\quad(r,s)=(q,p).
\]
If both labels lie in $\mathcal L_K$ and the two labels are distinct, the
second possibility can occur only when $K\equiv0\pmod3$.
\end{lemma}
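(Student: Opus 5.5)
Since $g$ is irrational, I separate the identity $Q_g(p,q)=Q_g(r,s)$ into its rational part and its coefficient of $g$. By Lemma~\ref{lem:relative-spectrum},
\[
 Q_g(p,q)-Q_g(r,s)=\bigl(Q_0(p,q)-Q_0(r,s)\bigr)+3g\bigl((p+q)-(r+s)\bigr),
\]
and the $g^2$ terms cancel. The quantities $Q_0(p,q)-Q_0(r,s)$ and $(p+q)-(r+s)$ are integers. If the second were nonzero, $g$ would equal a rational number, which is impossible. Hence both vanish:
\[
 p+q=r+s=:u,\qquad Q_0(p,q)=Q_0(r,s).
\]

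Next I use the $(u,v)$ coordinates of \eqref{eq:uv}. From $4Q_0(p,q)=3(p+q)^2+(p-q)^2$ and equality of the sums, $(p-q)^2=(r-s)^2$, so $r-s=\pm(p-q)$. Together with $r+s=p+q$, this gives $(r,s)=(p,q)$ or $(r,s)=(q,p)$. This proves the first assertion. Nonnegativity plays no role here; it only matters for $\mathcal L_K$ membership afterwards.

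For the second assertion, suppose $(p,q)$ and $(q,p)$ both lie in $\mathcal L_K$ and are distinct, so $p\neq q$. The membership conditions give $p+2q\equiv K$ and $q+2p\equiv K\pmod 3$. Subtracting, $q-p\equiv0\pmod3$, so $p\equiv q$ and $K\equiv p+2q\equiv 3q\equiv0\pmod3$. Equivalently, in $(u,v)$ coordinates the swap sends $v\mapsto -v$, and the condition $v\equiv K\pmod 3$ holds for both $\pm v$ only if $3\mid K$ (since $v\not\equiv0$ would force $K\equiv v\equiv -v$, which is impossible mod $3$ unless $v\equiv 0$).

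There is no real obstacle. The only point needing care is the separation step, which relies on the $g^2$ terms cancelling exactly, so that what remains is linear in $g$ with integer coefficients and irrationality applies.
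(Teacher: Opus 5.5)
Your proof is correct and follows essentially the same route as the paper: you split $Q_g(p,q)=Q_g(r,s)$ into its integer part and its $g$-coefficient, then recover $\{r,s\}=\{p,q\}$ from the common sum and the common value of $Q_0$. The only differences are cosmetic: you use $(p-q)^2=(r-s)^2$ via $4Q_0=3(p+q)^2+(p-q)^2$ where the paper uses $pq=rs$ via $Q_0=(p+q)^2-pq$, and your subtraction of the two congruences is equivalent to the paper's observation that $q+2p\equiv -K \pmod 3$, which forces $K\equiv -K$.
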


\begin{proof}
The equality is
\[
 Q_0(p,q)-Q_0(r,s)+3g(p+q-r-s)=0.
\]
Both coefficients of $1$ and $g$ are integers, so irrationality gives
$p+q=r+s$ and $Q_0(p,q)=Q_0(r,s)$.  Since
$Q_0(p,q)=(p+q)^2-pq$, we also have $pq=rs$ which forces $(r,s)=(p,q)$ or $(q,p)$.

Suppose that the distinct labels $(p,q)$ and $(q,p)$ both belong to
$\mathcal L_K$.  Since $p+2q\equiv K\pmod3$, the reflected congruence is
$q+2p\equiv-K\pmod3$.  Thus the reflected point lies in $\mathcal L_K$
only if $K\equiv-K\pmod3$, which is equivalent to $K\equiv0\pmod3$.
\end{proof}

We remove this remaining symmetry by restricting to $\mathcal{P}_K$. The following Weyl law is an easy consequence of lattice point equidistribution.

\begin{lemma}\label{lem:weyl}
For every irrational $g>1$ and $K\in\Z$,
\[
 N_{K,g}(X)=\rho_KX+O_{g,K}(X^{1/2}).
\]
\end{lemma}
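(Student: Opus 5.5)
Since $g$ is irrational, \cref{lem:collision} shows that $E_{p,q}$ is injective on $\mathcal P_K$. Hence $N_{K,g}(X)$ equals the number of labels $(p,q)\in\mathcal P_K$ with $Q_g(p,q)\le 3X$, and the whole task is a lattice point count in a sector of an ellipse.

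We use the coordinates \eqref{eq:uv}, where $4Q_g=3(u+2g)^2+v^2$. The condition $Q_g\le 3X$ becomes $3(u+2g)^2+v^2\le 12X$. By \eqref{eq:uv-lattice}, the admissible $(u,v)$ form a coset of the lattice $\Lambda=\{u\equiv v \ (2),\ v\equiv 0\ (3)\}$, intersected with the cone $u\ge|v|$. When $K\equiv0\pmod3$ we add the half-plane $v\ge0$, since $p\ge q$ means $v\ge0$. The lattice $\Lambda$ has index $6$ in $\Z^2$, so its covolume is $6$.

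The region $\Omega_X=\{u\ge|v|,\ 3(u+2g)^2+v^2\le 12X\}$ is convex with boundary of length $O(\sqrt X)$. The standard Lipschitz-principle count, e.g.\ tiling by fundamental parallelograms of $\Lambda$ and counting those that meet the boundary, gives
\[
 \#\bigl((\text{coset})\cap\Omega_X\bigr)=\frac{\operatorname{area}(\Omega_X)}{6}+O(\sqrt X),
\]
with the implied constant depending only on $\Lambda$ and $g$. We may replace the shifted ellipse by the centered one $3u^2+v^2\le 12X$ intersected with the same cone. Shifting $u$ by $2g$ moves the boundary by $O(1)$, which changes the area by $O(\sqrt X)$.

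It remains to compute the area of the centered sector. Set $u'=\sqrt3\,u$. The ellipse becomes the disc of radius $\sqrt{12X}$, of area $12\pi X$, and the area scales by $1/\sqrt3$. The cone $u\ge|v|$ becomes $|v|\le u'/\sqrt3$, a sector of half-angle $\pi/6$, so total angle $\pi/3$, which is $1/6$ of the disc. The area is therefore
\[
 \frac{1}{\sqrt3}\cdot\frac{12\pi X}{6}=\frac{2\pi X}{\sqrt3},
\]
and dividing by $6$ gives $\frac{\pi}{3\sqrt3}X$, which is $\rho_K X$ for $K\not\equiv0$. For $K\equiv0$ the extra condition $v\ge0$ halves the sector and gives $\frac{\pi}{6\sqrt3}X$.

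The diagonal points $v=0$ are included in $\mathcal P_K$. Since the count uses the closed half-plane $v\ge 0$, there is no double counting, and the diagonal contributes only $O(\sqrt X)$ anyway.

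The only mildly delicate step is making the boundary error uniform for a coset of a sublattice intersected with a cone with non-lattice apex geometry. This is handled by the standard Davenport/Lipschitz lemma for convex planar regions, since the region is convex with $O(\sqrt X)$ perimeter.
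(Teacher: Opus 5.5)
Your proof is correct and follows essentially the same route as the paper: a Lipschitz-principle count for a lattice coset in a sector of the shifted ellipse, an $O(\sqrt X)$ correction for the shift by $g$, and an explicit area computation. The differences are cosmetic. You work in the diagonalizing coordinates $(u,v)$ with the index-$6$ lattice $\Lambda$ rather than in $(p,q)$ with the $B$-angle $\pi/3$ and index $3$, you impose $v\geq0$ directly instead of halving by the reflection symmetry, and you state explicitly the injectivity from \cref{lem:collision} that the paper uses tacitly.
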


\begin{proof}
Let
\[
 B=\begin{pmatrix}1&1/2\\1/2&1\end{pmatrix},
 \qquad Q_0(x,y)=(x,y)B(x,y)^t.
\]
Since $\det B=3/4$, the change $w=B^{1/2}(x,y)^t$ gives
\[
 \vol\{(x,y)\in\R^2:Q_0(x,y)\leq Y\}
 =\frac{\pi Y}{\sqrt{\det B}}=\frac{2\pi Y}{\sqrt3}.
\]
The $B$-inner product of $(1,0)$ and $(0,1)$ is $1/2$, and both vectors
have $B$-norm one.  Thus the angle between the two boundary rays of the
positive quadrant is $\arccos(1/2)=\pi/3$ in the $B$-metric.  The quadrant
therefore occupies one sixth of the ellipse, so
\begin{equation}\label{eq:sector-area}
 \vol\{(x,y)\in\R_{\geq0}^2:Q_0(x,y)\leq Y\}
 =\frac{\pi Y}{3\sqrt3}.
\end{equation}

In fact
\[
 Q_g(p,q)=Q_0(p+g,q+g),
\]
so the relevant ellipse is translated by the fixed vector $(g,g)$.  Hence
\[
 \Omega_{g,Y}:=\{z\in\R_{\geq0}^2:Q_g(z)\leq Y\}
\]
has boundary consisting of a bounded number of $C^1$ arcs of total length
$O_g(Y^{1/2})$.  The planar Lipschitz principle gives, for every fixed
full-rank sublattice $L\subset\Z^2$ and coset $c+L$,
\[
 \#((c+L)\cap\Omega_{g,Y})
 =\frac{\vol(\Omega_{g,Y})}{[\Z^2:L]}+O_{g,L}(Y^{1/2}).
\]
Translating the ellipse by the fixed vector $(g,g)$ changes its intersection
with the quadrant only in a strip of bounded width along a boundary of length
$O_g(Y^{1/2})$.  It follows from \eqref{eq:sector-area} that
\[
 \vol(\Omega_{g,Y})=\frac{\pi Y}{3\sqrt3}+O_g(Y^{1/2}).
\]

The congruence $p+2q\equiv K\pmod3$ defines a coset of the sublattice
\[
 L_0=\{(p,q)\in\Z^2:p+2q\equiv0\pmod3\},
 \qquad [\Z^2:L_0]=3.
\]
Taking $Y=3X$ because $E^{\mathrm{rel}}=Q_g/3$, we therefore obtain
\[
 \#\{(p,q)\in\mathcal L_K:Q_g(p,q)\leq3X\}
 =\frac13\frac{\pi(3X)}{3\sqrt3}+O_{g,K}(X^{1/2})
 =\frac{\pi X}{3\sqrt3}+O_{g,K}(X^{1/2}).
\]
When $K\equiv0\pmod3$, both the coset and $Q_g$ are invariant under
$(p,q)\mapsto(q,p)$.  The Weyl chamber splits into two open half-chambers
with the same cardinality.  The wall $p=q$ contains $O_g(X^{1/2})$ labels
below energy $X$.  Thus restriction to $p\geq q$ halves the main term without
changing the order of the error.  This gives the stated value of $\rho_K$.
\end{proof}

\section{Theta sums and Marklof's theorem}\label{sec:marklof-prelim}

\subsection*{The Jacobi quotient and the affine horocycle}

Write $\e(t)=e^{2\pi it}$.  For $k\geq1$ let
\[
 G^k=\operatorname{SL}(2,\R)\ltimes\R^{2k}.
\]
We write an element of $\R^{2k}$ as
$\xi={}^t(\boldsymbol x,\boldsymbol y)$ with
$\boldsymbol x,\boldsymbol y\in\R^k$, and let
$M\in\operatorname{SL}(2,\R)$ act on each of the $k$ column vectors
${}^t(x_j,y_j)$.  Thus the group law is
\[
 (M;\xi)(M';\xi')=(MM';\xi+M\xi').
\]
Put
\[
 n(u)=\begin{pmatrix}1&u\\0&1\end{pmatrix},\qquad
 a(v)=\begin{pmatrix}v^{1/2}&0\\0&v^{-1/2}\end{pmatrix}
 \quad(v>0),\qquad
 k(\phi)=\begin{pmatrix}\cos\phi&-\sin\phi\\
                         \sin\phi& \cos\phi\end{pmatrix}.
\]
If $f\in\mathcal S(\R^k)$ and
$g=(n(u)a(v)k(\phi);{}^t(\boldsymbol x,\boldsymbol y))$, define
\begin{equation}\label{eq:marklof-theta}
 \Theta_f(g)=v^{k/4}\sum_{m\in\Z^k}
 f_\phi((m-\boldsymbol y)v^{1/2})
 \e\!\left(\frac u2\lVert m-\boldsymbol y\rVert^2
                     +m\cdot\boldsymbol x\right).
\end{equation}
Here $f_\phi=\widetilde R(i,\phi)f$, where
$\widetilde R$ is the metaplectic representation; only the facts
$f_0=f$ and $\lVert f_\phi\rVert_2=\lVert f\rVert_2$ will be used.
Define $\Gamma_\theta^k$ to be the common stabilizer
\[
 \left\{\gamma\in\operatorname{SL}(2,\Z)\ltimes(\tfrac12\Z)^{2k}:
 (\Theta_f\overline{\Theta_{f'}})(\gamma g)
 =(\Theta_f\overline{\Theta_{f'}})(g)
 \text{ for all }f,f'\in\mathcal S(\R^k),\ g\in G^k\right\}.
\]
The theta transformation formulas obtained from Poisson summation show that
$\Gamma_\theta^k$ has finite index in
$\operatorname{SL}(2,\Z)\ltimes(\tfrac12\Z)^{2k}$
\cite[Lemmas~4.11--4.12]{Marklof2003}.  Consequently each product is a
function on the finite-volume quotient $\Gamma_\theta^k\backslash G^k$.

The rational dependence relevant here has a particularly concrete
description.  We call a subgroup of $G^2$ \emph{arithmetic} if it is
commensurable with
$\operatorname{SL}(2,\Z)\ltimes\Z^4$; replacing $\Z^4$ by a rational
dilate gives the same class.  Inside $G^2$ set
\[
 H=\{(M;{}^t(x,0,y,0)):M\in\operatorname{SL}(2,\R),\ x,y\in\R\}.
\]
The map $(M;{}^t(x,0,y,0))\mapsto(M;{}^t(x,y))$ identifies $H$ with $G^1$.
For an arithmetic subgroup $\Gamma<G^2$, put $\Gamma_H=\Gamma\cap H$ and
\[
 Y=\Gamma_H\backslash H.
\]
The group $\Gamma_H$ is a lattice in $H$, so $Y$ has finite Haar measure.  In
the coordinates $(u,v,\phi,x,y)$ above this measure is a constant multiple of
\[
 \frac{du\,dv\,d\phi\,dx\,dy}{v^2};
\]
we write $\mu_Y$ for its normalization to total mass one.
For $\alpha\in\R$ let
\[
 g_\alpha=(I;{}^t(0,0,-\alpha,0))\in H.
\]
The point represented in Iwasawa coordinates by
\[
 (u+iv,0;{}^t(0,0,-\alpha,0))
\]
is precisely $\Gamma_Hg_\alpha n(u)a(v)$.  Thus the limit $v\to0$ is an expanding affine
horocycle in $Y$.

\begin{lemma}\label{lem:theta-identity}
Let $M$ be a discrete set and let $\lambda_m>0$ for $m\in M$.  For
$\psi_1,\psi_2\in C_c^\infty(\R_{>0})$ define
\[
 S_{j,X}(u)=X^{-1/2}\sum_{m\in M}
 \psi_j\!\left(\frac{\lambda_m}{X}\right)
 \e\!\left(\frac u2\lambda_m\right).
\]
For $h\in C_c(\R)$ set
\[
 \widehat h(s)=\int_\R h(u)\e(us/2)\,du.
\]
Then
\begin{equation}\label{eq:theta-pair-identity}
 \int_\R S_{1,X}(u)\overline{S_{2,X}(u)}h(u)\,du
 =\frac1X\sum_{m,n\in M}
 \psi_1\!\left(\frac{\lambda_m}{X}\right)
 \psi_2\!\left(\frac{\lambda_n}{X}\right)
 \widehat h(\lambda_m-\lambda_n).
\end{equation}
If $\lambda_m$ is a positive quadratic polynomial on a lattice coset, the
left side of \eqref{eq:theta-pair-identity}, with $v=X^{-1}$, is a finite
linear combination of integrals of products of the form
$\Theta_f\overline{\Theta_{f'}}(g_\alpha n(u)a(v))$.
\end{lemma}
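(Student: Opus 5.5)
The identity \eqref{eq:theta-pair-identity} is a direct computation, and the plan is to expand the product and integrate term by term. Since $\psi_j$ has compact support in $\R_{>0}$, only finitely many $m$ contribute for fixed $X$ whenever $\lambda_m\to\infty$ along the discrete set (the case of interest). Without that assumption we at least need the sums to converge absolutely, and we treat them as finite sums in what follows. We then write
\[
 S_{1,X}(u)\overline{S_{2,X}(u)}
 =\frac1X\sum_{m,n}\psi_1\!\left(\tfrac{\lambda_m}X\right)\overline{\psi_2\!\left(\tfrac{\lambda_n}X\right)}\,
 \e\!\left(\tfrac u2(\lambda_m-\lambda_n)\right),
\]
where $\psi_2$ is taken real-valued, as it is implicitly in the statement. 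Multiplying by $h$, integrating in $u$, and exchanging sum and integral (a finite sum, or absolutely convergent with $h\in C_c$) gives $\int h(u)\e(u(\lambda_m-\lambda_n)/2)\,du=\widehat h(\lambda_m-\lambda_n)$, which is the claimed identity.

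For the second assertion, take $M=c+L$ a lattice coset and $\lambda_m=Q(m+\beta)$ with $Q$ positive definite rational, as in our application via \eqref{eq:uv}, where $4Q_g=3(u+2g)^2+v^2$. The plan has three steps.
\begin{enumerate}[label=(\roman*)]
\item Diagonalize, or otherwise bring $Q$ to the standard norm $\lVert\cdot\rVert^2$ by a rational linear change of variables. This replaces $c+L$ by a coset of a rational lattice, and the index-two and index-three conditions are detected by finite character sums $\frac1{[\Z^2:L]}\sum_{\chi}\chi(m)$ with characters $\chi(m)=\e(r\cdot m)$, $r$ rational.
\item Replace $\psi_j(\lambda/X)$ by a function of $(m+\beta)X^{-1/2}$, namely $f(w)=\psi_j(\lVert w\rVert^2)$, which is Schwartz (indeed $C_c^\infty$) on $\R^k$. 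With $v=X^{-1}$ the weight $v^{k/4}$ in \eqref{eq:marklof-theta} equals $X^{-k/4}$. For $k=2$ this is exactly the prefactor $X^{-1/2}$, so each $S_{j,X}(u)$ is a finite combination of $\Theta_f(g\,n(u)a(v))$. Here $g$ encodes the shift $\boldsymbol y=-\beta$ together with the rational character parameters $\boldsymbol x=r$.
\item Absorb the $g$-dependence into the $y$-coordinate, which gives the point $g_\alpha$, and absorb the rational $x$-shifts into the arithmetic subgroup. Rational translations normalize a commensurable arithmetic group, so they can be moved to the left modulo finite index.
\end{enumerate}

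The main obstacle is step (ii)–(iii) for the non-square form. The factor $3$ in $3(u+2g)^2+v^2$ must be handled, for instance by writing $3u^2=(\sqrt3u)^2$ and keeping the lattice non-square, or by rescaling $u\mapsto 3u'$ on a finer lattice. We also have to check that the shift lands only in the first coordinate, as $g_\alpha$ requires. This is why $H$ is the embedded $G^1$ with the second coordinate set to zero: the $v$-coordinate is unshifted, so only one irrational parameter $\alpha$ (a rational multiple of $g$) appears, and the rest is rational. Products $\Theta_f\overline{\Theta_{f'}}$ are then invariant under the relevant $\Gamma_\theta^2$ by \cite[Lemmas~4.11--4.12]{Marklof2003}.
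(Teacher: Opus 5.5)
Your proof of the identity \eqref{eq:theta-pair-identity} is the paper's argument: the sums are finite, so you integrate term by term. Your two caveats are correct refinements. Finiteness needs $\lambda_m\to\infty$, which the paper gets from properness of the quadratic polynomial rather than from discreteness of $M$. And the formula as written needs $\psi_2$ real-valued.

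The gap is in the theta representation, at your step (i). For $Q=3u^2+v^2$ there is no rational linear change of variables to a multiple of the Euclidean norm. If $P\in\operatorname{GL}(2,\Q)$ and $c\in\Q$ satisfy $P^t\operatorname{diag}(3,1)P=cI$, taking determinants gives $3(\det P)^2=c^2$, so $\sqrt3\in\Q$. The same determinant argument defeats your repair $u\mapsto 3u'$, since rescaling a coordinate changes the coefficient ratio only by a rational square. It also rules out any fixed rescaling of $\tau$. So the conclusion of step (ii) does not follow and is not literally true. That conclusion was that $S_{j,X}$ is a finite combination of the functions \eqref{eq:marklof-theta} on $\Z^2$, with Euclidean phase and rational characteristics.

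Your other suggestion, $3u^2=(\sqrt3u)^2$ on the lattice $\sqrt3\Z\times\Z$, is where the real content lies, and you leave it unchecked. In Marklof's coordinates, the theta function of that lattice is not invariant under any group commensurable with $\operatorname{SL}(2,\Z)\ltimes\Z^4$, because its translation stabilizer involves $\sqrt3\Z$ and $\tfrac1{\sqrt3}\Z$. Until its automorphy group is identified, step (iii) and the appeal to \cite[Lemmas~4.11--4.12]{Marklof2003} have nothing to act on.

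The paper never diagonalizes; it keeps $A$ in both the cutoff, $f(w)=\psi_j(w^tAw)$, and the phase. In other words, it works with the theta series \eqref{eq:rational-theta-component} of the rational form $A$ on the coset $c+L$. It clears denominators, expands the congruence into characters, and uses the finite Weil representation for rational forms (\cref{lem:rational-theta}). That gives automorphy under a congruence subgroup with rational translations. The paper's own sentence citing \eqref{eq:marklof-theta} is loose on exactly this point, but the route is the right one.

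Equivalently, in the standard model this is the theta function of the lattice $\Z^2A^{1/2}$. Its stabilizer is the image of an arithmetic group under the automorphism of $G^2$ that multiplies $\boldsymbol x$ and $\boldsymbol y$ on the right by $A^{1/2}$. This map commutes with the $\operatorname{SL}(2,\R)$-action, so finite volume and the horocycle structure carry over. That is the missing idea.

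With it, your step (iii) goes through as you describe. Note that the irrational coordinate must be read in lattice coordinates, where it is $2g$. In Euclidean coordinates it would be $2\sqrt3g$, which is not a rational affine image of $g$, so the Diophantine transfer would fail there.
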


\begin{proof}
Because the $\psi_j$ have compact support and the quadratic polynomial is
proper, both sums are finite.  We may therefore multiply them and integrate
term by term.  The summand indexed by $(m,n)$ contains
\[
 \int_\R h(u)\e\!\left(\frac u2(\lambda_m-\lambda_n)\right)du
 =\widehat h(\lambda_m-\lambda_n),
\]
which proves \eqref{eq:theta-pair-identity}.  If
$\lambda_m=(m-\boldsymbol y)^tA(m-\boldsymbol y)$, take a Schwartz function
whose value at $w=(m-\boldsymbol y)X^{-1/2}$ is
$\psi_j(w^tAw)$.  Formula \eqref{eq:marklof-theta} then gives the asserted
theta representation.  A congruence coset and rational $A$ require finitely
many component theta functions; these are constructed in
\cref{lem:rational-theta} below.
\end{proof}

\subsection*{Height, domination, and nonescape}

Let $\Gamma_\infty=\{n(m):m\in\Z\}$.  If $\tau=u+iv$ and
$\gamma=\left(\begin{smallmatrix}a&b\\c&d\end{smallmatrix}\right)$, put
\[
 v_\gamma=\operatorname{Im}(\gamma\tau)
 =\frac{v}{|c\tau+d|^2},\qquad
 {}^t(\boldsymbol x_\gamma,\boldsymbol y_\gamma)
 =\gamma{}^t(\boldsymbol x,\boldsymbol y).
\]
The modular height is the well-defined function
\[
 \mathcal H(\Gamma g)=
 \sup_{\gamma\in\operatorname{SL}(2,\Z)}v_\gamma.
\]
For a nonnegative, even, rapidly decreasing function $f_0\in C(\R)$ and
$R>1$, define Marklof's cusp majorant
\begin{equation}\label{eq:cusp-majorant}
 F_R(g)=
 \sum_{\gamma\in\Gamma_\infty\backslash\operatorname{SL}(2,\Z)}
 \sum_{m\in\Z}
 f_0((y_{1,\gamma}+m)v_\gamma^{1/2})v_\gamma
 \one_{[R,\infty)}(v_\gamma).
\end{equation}
The sum is invariant under
$\operatorname{SL}(2,\Z)\ltimes\Z^{2k}$.  A continuous function $F$ on an
arithmetic Jacobi quotient is said to be \emph{dominated by $F_R$} if there
is an $L>1$ such that, for all sufficiently large $R$,
\begin{equation}\label{eq:dominated}
 |F(g)|\one_{\{\mathcal H(g)\geq R\}}\leq L+F_R(g)
 \qquad(g\in G^k).
\end{equation}
This is Marklof's precise definition of moderate growth in the cusp
\cite[Section~7.2]{Marklof2003}.  The following lemma combines
\cite[Theorem~5.7]{Marklof2003},
\cite[Lemma~6.6 and Proposition~6.5]{Marklof2003}, and
\cite[Corollaries~7.4 and~7.6]{Marklof2003}.

\begin{lemma}
\label{lem:marklof-nonescape}
Let $\Gamma_H<H$ be commensurable with
$\operatorname{SL}(2,\Z)\ltimes\Z^2$, let $\mu_Y$ be Haar probability
measure on $Y=\Gamma_H\backslash H$, and let $\alpha\notin\Q$.
For every bounded continuous $F:Y\to\C$ and every $h\in C_c(\R)$,
\begin{equation}\label{eq:bounded-equid}
 \lim_{v\to0}\int_\R F(\Gamma_Hg_\alpha n(u)a(v))h(u)\,du
 =\left(\int_YF\,d\mu_Y\right)\left(\int_\R h(u)\,du\right).
\end{equation}
Suppose in addition that $\alpha$ is Diophantine of type $\kappa$ in the
sense of \eqref{eq:diophantine}.  If $h_0\in C_c(\R)$ is nonnegative,
$0<\epsilon<1$, and $0<\epsilon'<1/(\kappa-1)$, then the function
\eqref{eq:cusp-majorant} satisfies
\begin{equation}\label{eq:majorant-bound}
 \limsup_{v\to0}\int_{|u|>v^{1-\epsilon}}
 F_R(g_\alpha n(u)a(v))h_0(u)\,du
 \ll_{\alpha,f_0,h_0,\epsilon,\epsilon'}R^{-\epsilon'/2}.
\end{equation}
Consequently, if $F$ is continuous and dominated by $F_R$, then
\begin{equation}\label{eq:unbounded-equid}
 \lim_{v\to0}\int_{|u|>v^{1-\epsilon}}
 F(g_\alpha n(u)a(v))h(u)\,du
 =\left(\int_YF\,d\mu_Y\right)\left(\int_\R h(u)\,du\right).
\end{equation}
The same statements hold on a finite cover of $Y$ and on a finite union of
rational translates of such covers.
\end{lemma}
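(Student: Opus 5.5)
This lemma collects results from \cite{Marklof2003}, so the plan is to transfer Marklof's statements to the present quotient $Y$ rather than prove anything new. The transfer goes through three steps, matching the three displayed conclusions.

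\textbf{Step 1: bounded equidistribution.} We first reduce to $\Gamma_H$ of finite index in $\operatorname{SL}(2,\Z)\ltimes\Z^2$. Commensurability gives a common finite-index subgroup. Equidistribution on a finite cover implies equidistribution downstairs by pushforward. A rational translate conjugates the problem back to a finite-index subgroup, at the cost of shifting $\alpha$ by a rational number, and this keeps $\alpha$ irrational. On the finite-index subgroup, \cite[Theorem~5.7]{Marklof2003} gives \eqref{eq:bounded-equid}: the expanding affine horocycle $\{g_\alpha n(u)a(v)\}$ equidistributes for irrational $\alpha$. Marklof proves this with Ratner's theorem. The irrationality of $\alpha$ rules out the intermediate closed orbits of $\operatorname{SL}(2,\R)\ltimes\Q$-type subgroups. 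Marklof's proof covers every finite-index subgroup, since the classification of invariant measures is insensitive to passing to finite covers.

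\textbf{Step 2: the cusp estimate \eqref{eq:majorant-bound}.} Unfold $F_R$ over $\Gamma_\infty\backslash\operatorname{SL}(2,\Z)$. The terms with $v_\gamma\ge R$ and $\gamma\ne$ identity correspond to $(c,d)$ with $|c\tau+d|^2\le v/R$. For $|u|>v^{1-\epsilon}$ these force $u$ to lie near rationals $-d/c$ with $c\ll (vR)^{-1/2}$. The inner sum over $m$ is $\asymp v_\gamma^{1/2}$ times a count of $m$ with $|c\alpha$-type shift$+m|\lesssim v_\gamma^{-1/2}$. The Diophantine condition \eqref{eq:diophantine} bounds how often $c\alpha$ is close to an integer, namely $\|c\alpha\|\gg c^{1-\kappa}$. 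Integrating against $h_0$ over the short arcs then gives a total of $O(R^{-\epsilon'/2})$. This is exactly \cite[Lemma~6.6, Proposition~6.5]{Marklof2003}. The excluded window $|u|\le v^{1-\epsilon}$ removes the trivial coset, whose contribution would otherwise be of size $1$. I expect this step to be the main obstacle: it needs the quantitative Diophantine bookkeeping, including the range $\epsilon'<1/(\kappa-1)$. Here I would simply cite Marklof's estimate, noting that $F_R$ is defined on the full group and so finite covers do not affect it.

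\textbf{Step 3: the unbounded statement \eqref{eq:unbounded-equid}.} This follows by truncation. Take $h\ge0$ first, then extend by linearity, and let $h_0$ dominate $|h|$. Let $\chi_R$ be a continuous cutoff equal to $1$ where $\mathcal H\le R$ and vanishing where $\mathcal H\ge 2R$. Then $F\chi_R$ is bounded continuous, and Step 1 applies to it. The restriction to $|u|>v^{1-\epsilon}$ costs only $O(v^{1-\epsilon})$ for bounded functions. On the remaining region, domination \eqref{eq:dominated} gives
\[
|F|(1-\chi_R)\le (L+F_R)\one_{\{\mathcal H\ge R\}}.
\]
The $L$ term contributes $L\,\mu_Y(\mathcal H\ge R)+o(1)$ by Step 1 applied to a continuous majorant of the indicator, and this tends to $0$ as $R\to\infty$. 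The $F_R$ term is $O(R^{-\epsilon'/2})$ by Step 2. Domination also gives $F\in L^1(\mu_Y)$, because $F_R$ is integrable with $\int F_R\,d\mu_Y\to0$. Letting $v\to0$ and then $R\to\infty$ yields \eqref{eq:unbounded-equid}, as in \cite[Corollaries~7.4, 7.6]{Marklof2003}. The final sentence of the lemma, about finite covers and finite unions of rational translates, follows by repeating the reduction of Step 1 componentwise and summing.
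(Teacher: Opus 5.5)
Your proposal follows the same route as the paper, which gives no independent proof and simply combines Marklof's Theorem~5.7, Lemma~6.6/Proposition~6.5 and Corollaries~7.4/7.6; your Steps~1--3 cite the same results, and your truncation argument in Step~3 correctly unpacks the last citation. One slip in your Step~2 heuristic, which does not affect the argument because you cite that step: since $y_{1,\gamma}=-d\alpha$, the relevant Diophantine quantity is $\|d\alpha\|$ (not $\|c\alpha\|$), each surviving term has size $v_\gamma$ (not $v_\gamma^{1/2}$), and the window $|u|\le v^{1-\epsilon}$ removes the coset $(c,d)=(\pm1,0)$ at the cusp $u=0$, where $d\alpha=0$; it does not remove the trivial coset, which has $v_\gamma=v<R$ and never contributes.
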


\begin{lemma}
\label{lem:theta-domination}
For $f_1,f_2\in\mathcal S(\R^2)$, the function
$F=\Theta_{f_1}\overline{\Theta_{f_2}}$ satisfies
\eqref{eq:dominated} for a majorant of the form
\eqref{eq:cusp-majorant}.  This remains true for every component obtained by
a rational congruence condition and for every smooth angular cutoff.
\end{lemma}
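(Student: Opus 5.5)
The plan is to bound $|F|$ in the cusp by a single term of a majorant of the shape \eqref{eq:cusp-majorant}, adapted to $k=2$. Concretely, the majorant is
\[
 F_R(g)=\sum_{\gamma\in\Gamma_\infty\backslash\operatorname{SL}(2,\Z)}\sum_{m\in\Z^2}
 f_0\bigl((\boldsymbol y_\gamma+m)v_\gamma^{1/2}\bigr)\,v_\gamma\,\one_{[R,\infty)}(v_\gamma),
\]
with $f_0(w)=f_{00}(w_1)f_{00}(w_2)$ a product of nonnegative, even, rapidly decreasing one-variable functions. Here $v_\gamma^{k/2}=v_\gamma$ for $k=2$. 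All terms are nonnegative, so it suffices to dominate $|F(g)|$ by the one term coming from a $\gamma$ that (nearly) attains the height. First, by $|\Theta_{f_1}\overline{\Theta_{f_2}}|\le\frac12(|\Theta_{f_1}|^2+|\Theta_{f_2}|^2)$, it is enough to treat $|\Theta_f|^2$ for a single $f\in\mathcal S(\R^2)$.

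Fix $g$ with $\mathcal H(g)\ge R$. Choose $\gamma\in\operatorname{SL}(2,\Z)$ with $v_\gamma\ge\mathcal H(g)/2\ge R/2$; the supremum is in fact attained, since only finitely many $\gamma$ modulo $\Gamma_\infty$ have $v_\gamma$ above any positive level. I would replace $\gamma$ by an element of the finite-index group $\Gamma^2_\theta$ in the same class, using finitely many coset representatives, and absorb the resulting bounded change of $v_\gamma$ into constants. The invariance of $|\Theta_f|^2$ under $\Gamma_\theta^2$ then up to a finite list of representatives, lets me evaluate at $\gamma g$. There the definition \eqref{eq:marklof-theta} gives
\[
 |\Theta_f(\gamma g)|\le v_\gamma^{1/2}\sum_{m\in\Z^2}\bigl|f_{\phi_\gamma}\bigl((m-\boldsymbol y_\gamma)v_\gamma^{1/2}\bigr)\bigr|.
\]

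The key analytic input is a uniform bound $|f_\phi(w)|\le C_N(1+|w|)^{-N}$ for $\phi\in[0,2\pi)$. It holds because the metaplectic rotation $\widetilde R(i,\phi)$ acts diagonally on the Hermite basis with unimodular eigenvalues. Hence the weighted norms $\|(1+|w|^2+|\partial|^2)^{M}f_\phi\|_2$ do not depend on $\phi$, and Sobolev embedding yields uniform pointwise decay. Set $F_0(w)=C_N(1+|w_1|)^{-N}(1+|w_2|)^{-N}$, and write $a_m:=F_0((m-\boldsymbol y_\gamma)v_\gamma^{1/2})$. For $v_\gamma\ge1$ one has $\sum_m a_m\le C'$ uniformly in $\boldsymbol y_\gamma$. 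Therefore $(\sum_m a_m)^2\le C'\sum_m a_m$, and so
\[
 |\Theta_f(g)|^2\le C'v_\gamma\sum_{m\in\Z^2}F_0\bigl((\boldsymbol y_\gamma+m)v_\gamma^{1/2}\bigr),
\]
after replacing $m$ by $-m$ and using evenness of $F_0$. This is $C'$ times the $\gamma$-term of $F_R$ with $f_0=F_0$, or with $f_0=C'F_0$ to absorb the constant. The constant $L$ in \eqref{eq:dominated} absorbs the region $R\le\mathcal H<2R$ and the finitely many coset adjustments.

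For congruence components, a rational condition $m\in c+L'$ with $L'\subset\Z^2$ of finite index is handled by writing $m=c+Dn$. This expresses the component theta function as a $\Theta_{\tilde f}$ for a linearly rescaled Schwartz function $\tilde f$, evaluated at a rationally translated and rescaled point. The same argument then applies on the corresponding finite cover, at the cost of modified constants. For a smooth angular cutoff, the relevant functions are $f(w)=\psi(Q(w))\chi(\arg w)$ with $\psi\in C_c^\infty(\R_{>0})$. The support of $f$ avoids the origin, so $f$ is smooth with compact support and thus Schwartz, and nothing changes. I expect the main obstacle to be the uniformity in $\phi$ of the decay of $f_\phi$. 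I would settle it through the Hermite-basis argument above, that is, by controlling the Schwartz seminorms via the harmonic-oscillator norms, which $\widetilde R(i,\phi)$ preserves.
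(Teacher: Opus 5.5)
\paragraph{Verdict.} Your route is the paper's: bound the theta product in the standard cusp directly from \eqref{eq:marklof-theta} using $\phi$-uniform Schwartz bounds for $f_\phi$ (your harmonic-oscillator argument supplies what the paper only asserts), then move a point of height $\geq R$ into that cusp and sum over $\Gamma_\infty\backslash\operatorname{SL}(2,\Z)$. There is, however, a genuine gap in the moving step, and it is one the paper's sketch shares.

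\paragraph{Where the moving step fails.} A coset representative whose matrix does not fix $\infty$ changes $v$ by the unbounded factor $|c\tau+d|^{-2}$. More to the point, the elements of $\Gamma_\theta^2$ lying over a given $\gamma$ have translation parts in $(\frac12\Z)^4$ that are in general not integral. So $|\Theta_f(g)|$ is not $|\Theta_f((\gamma;0)g)|$. The $\boldsymbol y$-coordinate at which \eqref{eq:marklof-theta} may be applied is $\boldsymbol y_\gamma+\boldsymbol y'_\gamma$ with $\boldsymbol y'_\gamma\in(\frac12\Z)^2$, not $\boldsymbol y_\gamma$, and a half-integer shift cannot be absorbed into constants.

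\paragraph{A counterexample.} Take $f(w)=e^{-\pi|w|^2}$ and $g=(n(-1)a(\epsilon);{}^t(\frac12,\frac12,0,0))$. Then $\Theta_f(g)=\epsilon^{1/2}\bigl(\sum_{m\in\Z}e^{-\pi\epsilon m^2}\bigr)^2$, so $|\Theta_f(g)|^2\sim\epsilon^{-1}$. The only cosets with $v_{\gamma'}\geq1$ are those of $\pm\gamma$, where $\gamma=\bigl(\begin{smallmatrix}0&-1\\1&1\end{smallmatrix}\bigr)$. For these, $v_{\pm\gamma}=\epsilon^{-1}$ and $\boldsymbol y_{\pm\gamma}=\pm(\frac12,\frac12)$. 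Hence every term of your majorant, and of \eqref{eq:cusp-majorant} for any admissible $f_0$, is $O_A(\epsilon^A)$. Both your final displayed inequality and \eqref{eq:dominated} therefore fail as $\epsilon\to0$. (The element of $\Gamma_\theta^2$ over this $\gamma$ has $\boldsymbol y$-part $\equiv(\frac12,\frac12)$, which is what moves $\boldsymbol y_\gamma$ onto $\Z^2$.)

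\paragraph{Relation to the paper and a repair.} The paper's one-line instruction to ``move an arbitrary point of height at least $R$ to the standard cusp'' passes over exactly this point. So, with $m\in\Z$ and the linear action, the statement itself needs adjusting, not only your argument.
\begin{itemize}
\item Use that $\Gamma_\theta^2$ contains Marklof's theta group, which projects onto $\operatorname{SL}(2,\Z)$. Thus over every $\gamma$ there is some $(\gamma;\xi_\gamma)$ with $\xi_\gamma\in(\frac12\Z)^4$, and $v$ is preserved exactly.
\item Apply your cusp bound at $(\gamma;\xi_\gamma)g$, and let $m$ run over $\frac12\Z$ in the majorant (over $\frac1N\Z$ for the rational components).
\item Since $\sum_{m\in\frac12\Z}f_0((y+m)v^{1/2})=\sum_{n\in\Z}f_0(\frac12(2y+n)v^{1/2})$, along $g_\alpha n(u)a(v)$ this is a majorant of the form \eqref{eq:cusp-majorant} along $g_{2\alpha}n(u)a(v)$. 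Then \eqref{eq:majorant-bound} survives, because $2\alpha$ is Diophantine of the same type.
\end{itemize}

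\paragraph{Smaller points.}
\begin{itemize}
\item Your two-dimensional majorant is not of the form \eqref{eq:cusp-majorant}. Collapse it via $\sum_{m_2}f_{00}((y_2+m_2)v^{1/2})\ll1$ for $v\geq1$, since \eqref{eq:majorant-bound} is stated for the one-dimensional form.
\item The constant $L$ in \eqref{eq:dominated} must be independent of $R$, so it cannot absorb $\{R\leq\mathcal H<2R\}$, where $|F|$ can be of size $R$. Use the attained supremum $v_\gamma=\mathcal H(g)$ instead.
\item For congruence components, the substitution $m=c+Dn$ turns $\lVert\cdot\rVert^2$ into the form $D^tD$, which is not of the shape \eqref{eq:marklof-theta} unless $D$ is scalar. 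Either split into cosets of $N\Z^2$, or use the paper's character expansion, which moves the congruence into rational characteristics.
\end{itemize}
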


\begin{proof}
Define
\[
 f_0(t)=\sup_{s\in\R}\sup_{\phi\in\R}
 |(f_1)_\phi(t,s)(f_2)_\phi(t,s)|.
\]
The metaplectic rotations of a Schwartz function are uniformly Schwartz for
$\phi$ in the compact circle.  Hence, for every $A>0$,
$f_0(t)\ll_A(1+|t|)^{-A}$.  Replace $f_0(t)$ by
$f_0(t)+f_0(-t)$ to make it even and nonnegative.

In the standard cusp $v>R$, expand both theta sums using
\eqref{eq:marklof-theta}.  Cauchy--Schwarz in the second lattice coordinate
and the definition of $f_0$ bound their product by
\[
 v\sum_{m\in\Z}f_0((m-y_1)v^{1/2})+O_A(v^{-A}).
\]
Move an arbitrary point of height at least $R$ to the standard cusp and sum
this estimate over $\Gamma_\infty\backslash\operatorname{SL}(2,\Z)$.  The
first term becomes \eqref{eq:cusp-majorant}, and the rapidly decreasing error
is absorbed in the constant $L$ in \eqref{eq:dominated}.  This is Marklof's
construction in \cite[Sections~7.2 and~8.4.3]{Marklof2003}.

A rational component is a finite linear combination of theta functions with
rational characteristics, so the same estimate holds after increasing its
constant.  Multiplication by a smooth angular cutoff replaces $f_j$ by a
Schwartz function and does not affect the argument.
\end{proof}

\subsection*{Rational theta data and the smoothed limit}

\begin{marklofdefinition}[Collision relation]\label{def:collision-relation}
Let $P_\alpha(m)=P_0(m)+\alpha\ell(m)+c$ on a lattice coset, where
$P_0$ is rational-valued, $\ell$ is a rational linear form, and
$\alpha\notin\Q$.  A pair $(m,n)$ is a \emph{bulk collision} if
\[
 \ell(m)=\ell(n),\qquad P_0(m)=P_0(n).
\]
The diagonal $m=n$ is always a bulk collision.  A \emph{collision graph} is
the graph of a map $\sigma$ for which $(m,\sigma(m))$ is a bulk collision for
all labels in its domain.
\end{marklofdefinition}

The terminology has a direct analytic meaning.  In the Haar integral of a
theta product, integration over the compact translation variable forces
$\ell(m)=\ell(n)$, and integration over the modular horocycle forces
$P_0(m)=P_0(n)$.

Let $A$ be a positive-definite symmetric $2\times2$ matrix with rational
entries, let $L\subset\Z^2$ be a finite-index sublattice, and fix
$c\in\Z^2$.  After interchanging the coordinates if necessary, write the
affine shift as $\boldsymbol\alpha=(\alpha,\beta)$, where
$\alpha\notin\Q$ and $\beta\in\Q$.  For $f\in\mathcal S(\R^2)$ and
$\tau=u+iv\in\mathbb H$, put
\begin{equation}\label{eq:rational-theta-component}
 \Theta_{A,L,c,\boldsymbol\alpha;f}(u,v)
 =v^{1/2}\sum_{m\in c+L}
 f\!\left(v^{1/2}A^{1/2}(m+\boldsymbol\alpha)\right)
 \e\!\left(\frac u2
 (m+\boldsymbol\alpha)^tA(m+\boldsymbol\alpha)\right).
\end{equation}
\begin{lemma}\label{lem:rational-theta}
The function in \eqref{eq:rational-theta-component} is a finite linear
combination of theta components on finite-volume arithmetic quotients of
$G^2$.  Along the associated affine horocycle, each component lies on one of
finitely many rational translates of $Y$, with irrational translation
coordinate $q\alpha+r$, where $q\in\Q^\times$ and $r\in\Q$.  Hence, if
$\alpha$ is Diophantine of finite type,
\cref{lem:marklof-nonescape,lem:theta-domination} apply componentwise.
\end{lemma}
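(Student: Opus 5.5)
The plan is to reduce the sum over the coset $c+L$ to the standard square-lattice theta functions \eqref{eq:marklof-theta} with $k=2$. There are three moves: diagonalize the rational form over $\Q$, expand the coset condition by characters, and track where the irrational shift lands.

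First, since $A$ is rational and positive definite, choose $T\in\operatorname{GL}(2,\Q)$ with $T^tAT=D$ diagonal with rational entries, and clear denominators. Writing $m=c+T'n$ with $T'$ an integral basis matrix of $L$, we get $(m+\boldsymbol\alpha)^tA(m+\boldsymbol\alpha)=(n+\boldsymbol\beta)^tA'(n+\boldsymbol\beta)$. Here $A'=T'^tAT'$ is rational and $\boldsymbol\beta=T'^{-1}(c+\boldsymbol\alpha)$. The form $A'$ is not diagonal in general, so I would pass to a further finite-index sublattice $L'\subset\Z^2$ on which a rational change of basis $S$ makes the form $d\,\lVert\cdot\rVert^2$ after scaling $u$ by a rational constant $d$ (or $\mathrm{diag}(d_1,d_2)$, handled coordinatewise). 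The sum over $\Z^2$ then splits into finitely many cosets $e+L'$. On each coset, the indicator of $e+L'$ inside a coarser lattice is a finite linear combination of additive characters $\e(m\cdot r)$ with $r\in\Q^2$, by finite Fourier analysis on $\Z^2/L'$. Each character is absorbed into the $\boldsymbol x$-coordinate of the Jacobi element, so each piece takes the form $\Theta_{\tilde f}(g)$ with $g=(n(du)a(d'v);{}^t(\boldsymbol x,\boldsymbol y))$. Here $\boldsymbol x\in\Q^2$, $\boldsymbol y$ is the transformed shift, and $\tilde f$ is $f$ composed with a fixed linear map, hence still Schwartz. The rescalings $u\mapsto du$ and $v\mapsto d'v$ are fixed rational dilations, so they only conjugate $\Gamma_\theta^2$ into another arithmetic subgroup. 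Hence all these pieces live on finite-volume arithmetic quotients of $G^2$.

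Second, I would identify the shift. The affine shift $\boldsymbol\alpha=(\alpha,\beta)$ with $\beta\in\Q$ becomes $\boldsymbol y=R(\alpha,\beta)+s$ for some rational matrix $R$ and $s\in\Q^2$. So $\boldsymbol y=\alpha\boldsymbol w+\boldsymbol r$ with $\boldsymbol w,\boldsymbol r\in\Q^2$ and $\boldsymbol w\ne0$. A rational element of $\operatorname{SL}(2,\Z)$ acting on the lattice coordinate moves $\boldsymbol w$ to a multiple of $e_1$. Since $\operatorname{GL}(2,\Z)$ acts on primitive vectors transitively after clearing denominators, this conjugates again by a commensurable group. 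The rational part of $\boldsymbol y$ and the rational $\boldsymbol x$ then differ from the $H$-orbit point by a rational translation, which is an element of $G^2(\Q)$. The translation coordinate becomes $q\alpha+r$ with $q\in\Q^\times$ and $r\in\Q$. Thus each component is the pullback, along $g_{q\alpha+r}n(\cdot)a(\cdot)$, of a function on a rational translate of $Y$. When one forms products $\Theta\overline{\Theta'}$, both factors use the same rational data, so the product lives on the intersection of the corresponding groups, which is a finite cover.

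Third, for the Diophantine claim, note that $q\alpha+r$ is Diophantine of the same type as $\alpha$. If $|q\alpha+r-a/b|<cb^{-\kappa}$, then clearing the denominators of $q$ and $r$ gives an approximation to $\alpha$ with denominator $\ll b$. Hence \cref{lem:marklof-nonescape} applies to each translate. \cref{lem:theta-domination} gives domination of each component product by a majorant $F_R$, since the components are theta functions with rational characteristics and Schwartz weights.

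The main obstacle is the bookkeeping of the metaplectic factor. The pieces must be genuine $\Theta_f$ in the sense of \eqref{eq:marklof-theta}, including the $\phi$-dependence, rather than just agreeing on the horocycle $\phi=0$. I would handle this by defining each piece directly through \eqref{eq:marklof-theta}, with $f$ composed with the fixed linear map. Rotations commute with orthogonal changes, and for non-orthogonal maps I would rely only on the horocycle values and on the transformation-invariance of the finitely many rationally twisted components, which suffices for applying the lemmas.
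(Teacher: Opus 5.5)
You have a genuine gap in the step that is supposed to place each piece on a quotient of $G^2$. Your character expansion into rational characteristics, the identification of the irrational coordinate as $q\alpha+r$, and the Diophantine bookkeeping all agree with the paper.

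You assert that after passing to a sublattice, a rational change of basis and a rational rescaling, the form becomes $d\lVert\cdot\rVert^2$. Each piece would then be Marklof's $\Theta_{\tilde f}$ at one point $(n(du)a(d'v);{}^t(\boldsymbol x,\boldsymbol y))$. This is impossible in general. For $2\times2$ matrices, $\det(S^tAS)=\det(S)^2\det A$ and $\det(cA)=c^2\det A$. So the class of $\det A$ in $\Q^\times/(\Q^\times)^2$ is invariant under all of these operations, while $d\lVert\cdot\rVert^2$ has square determinant. The form the paper actually needs is $A=\mathrm{diag}(3,1)$ in \eqref{eq:xi-rational-data}, with determinant $3$, so the reduction fails exactly in the case of interest.

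Your aside ``$\mathrm{diag}(d_1,d_2)$, handled coordinatewise'' is therefore the whole difficulty. The two coordinates carry different time parameters $d_1\tau$ and $d_2\tau$. The single phase $\frac u2\lVert m-\boldsymbol y\rVert^2$ of \eqref{eq:marklof-theta}, summed over $m\in\Z^2$, cannot encode that at one group element. Your proposed patch, using only horocycle values for non-orthogonal maps, does not work. \cref{lem:marklof-nonescape} needs $F$ as a continuous function on all of $Y$, including the $\phi$-direction, and $\int_YF\,d\mu_Y$ is not determined by the values at $\phi=0$. In the genuinely scalar case there is no metaplectic issue at all, since $A^{1/2}S/\sqrt d$ is then orthogonal; the obstacle you flagged is precisely the non-scalar case.

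Relatedly, re-basing $\Z^2$ by $U\in\operatorname{GL}(2,\Z)$ to move $\boldsymbol w$ into $\Q e_1$ replaces the form by $U^tA'U$. That undoes any normal form you had already reached. The rational $A'$-orthogonal basis $\{\boldsymbol w,\boldsymbol w^{\perp}\}$ instead diagonalizes and aligns the shift at once. In the application, $A=\mathrm{diag}(3,1)$ and $\boldsymbol\alpha=(2g,0)$ already have this shape.

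The missing idea is a theta function attached to the rational form itself rather than to the standard norm. The paper takes theta series of $L$ with the integral form $NA$ and rational characteristics. It then uses the finite Weil representation to get automorphy under a congruence subgroup, without ever making $A$ standard.

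In Marklof's language you can equivalently conjugate the diagonal embedding $\Phi(M)=\bigl(\begin{smallmatrix}aI&bI\\cI&dI\end{smallmatrix}\bigr)$ by $h_A=\mathrm{diag}(A^{1/2},A^{-1/2})$. Here $\Theta_f$ is the theta function on the Jacobi group of $\operatorname{Sp}(2,\R)$. Since $h_A\Phi(M)h_A^{-1}=\bigl(\begin{smallmatrix}aI&bA\\cA^{-1}&dI\end{smallmatrix}\bigr)$ is integral for $M$ in a congruence subgroup when $A$ is rational, the stabilizer of $g\mapsto\Theta_f(h_Ag)$ in $G^2$ is a lattice. It becomes arithmetic after a linear change of the index coordinates, which is an automorphism of $G^2$ because $G^2$ has no Heisenberg center.

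Your coordinatewise idea can also be completed. Products of rank-one theta functions at $d_1\tau$ and $d_2\tau$ are invariant under a $\Gamma_0(N)$-type group once one translation column is rescaled by a scalar. But you would then have to approximate the non-product weights $\psi_j(w^tAw)\chi(w)$ by tensor products, with uniform control in the cusp. Neither construction is in your proposal, and without one of them the first assertion of the lemma is not established for the form that matters.
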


\begin{proof}
Choose an integer clearing the denominators of $A$ and $\beta$.  Expanding
the congruence condition by
\[
 \one_{c+L}(m)=\frac1{[\Z^2:L]}
 \sum_{\chi\in\widehat{\Z^2/L}}\overline{\chi(c)}\chi(m)
\]
expresses \eqref{eq:rational-theta-component}, after a fixed rescaling of
$\tau$, as a finite linear combination of theta functions with rational
characteristics.  The standard finite Weil representation shows that their
products with conjugates descend to finite-volume arithmetic quotients of
$G^2$; see \cite{LionVergne1980,BerndtSchmidt1998}.

The rational coordinate $\beta$, the congruence characters, and the rational
characteristics contribute only finitely many rational translates of the
subgroup $H$ defined above.  On each translate, the remaining translation
coordinate is obtained from $\alpha$ by a rational affine change and is
therefore $q\alpha+r$ for some $q\in\Q^\times$ and $r\in\Q$.  Rational
affine changes preserve finite Diophantine type.  The
majorant argument in \cref{lem:theta-domination} is componentwise, while a
finite sum changes only its constants.  Thus the equidistribution and
nonescape statements apply to every component product.
\end{proof}

By a \emph{smooth angular weight} we mean a bounded function
$w\in C^\infty(\R^2)$ for which there exist $R_0>0$ and
$\omega\in C^\infty(S^1)$ such that
\[
 w(r\theta)=\omega(\theta)
 \qquad(\theta\in S^1,\ r\geq R_0).
\]
Thus $w$ is homogeneous of degree zero outside the ball $B(0,R_0)$.

\begin{markloflemma}\label{lem:marklof-smoothed}
Let $\{\lambda_m:m\in M\}$ be represented by the rational theta data of
\cref{lem:rational-theta}, and let $w$ be a smooth angular weight.  Suppose
that, for $w$ and $w^2$, the weighted Weyl law
\begin{equation}\label{eq:weighted-weyl}
 \lim_{X\to\infty}\frac1X\sum_{m\in M}
 w(m)\varphi(\lambda_m/X)
 =d(w)\int_0^\infty\varphi(t)\,dt
\end{equation}
holds for every $\varphi\in C_c^\infty(\R_{>0})$.  If the irrational affine
coordinate is Diophantine and $m=n$ is the only bulk collision on
$\operatorname{supp}(w)\times\operatorname{supp}(w)$, then
\begin{align}\label{eq:smoothed-offdiag}
 &\lim_{X\to\infty}\frac1X
 \sum_{\substack{m,n\in M\\m\ne n}}
 w(m)w(n)
 \psi_1\!\left(\frac{\lambda_m}{X}\right)
 \psi_2\!\left(\frac{\lambda_n}{X}\right)
 \widehat h(\lambda_m-\lambda_n)\notag\\
 &\qquad=d(w)^2
 \left(\int_\R\widehat h(s)\,ds\right)
 \left(\int_0^\infty\psi_1(t)\psi_2(t)\,dt\right)
\end{align}
for $\psi_1,\psi_2\in C_c^\infty(\R_{>0})$ and $h\in C_c(\R)$.
\end{markloflemma}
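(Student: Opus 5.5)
We follow Marklof's scheme: express the weighted pair sum as a horocycle integral of theta products, apply equidistribution plus nonescape, and read off the diagonal and off-diagonal contributions from the Haar integral.

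\emph{Step 1: theta representation.} Insert the weight $w$ into the sums of \cref{lem:theta-identity}, setting
\[
 S_{j,X}(u)=X^{-1/2}\sum_{m\in M}w(m)\psi_j(\lambda_m/X)\,\e(u\lambda_m/2).
\]
Then $\int S_{1,X}\overline{S_{2,X}}h\,du$ equals $X^{-1}$ times the full double sum. This includes the diagonal, which we handle separately. Outside $B(0,R_0)$ the weight is $\omega(\theta)$. The cutoff $w(m)\psi_j(\lambda_m/X)$ is therefore, up to an $O(X^{-1/2})$ error from the compact ball and the bounded shift, the value at $v^{1/2}A^{1/2}(m+\boldsymbol\alpha)$ of a Schwartz function built from $\psi_j(|z|^2)$ times a smooth angular factor, with $v=X^{-1}$. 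By \cref{lem:rational-theta} and \cref{lem:theta-domination}, $S_{1,X}\overline{S_{2,X}}$ is then a finite linear combination of products $\Theta_f\overline{\Theta_{f'}}$ along $g_{q\alpha+r}n(u)a(v)$, each dominated by $F_R$.

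\emph{Step 2: the small-$|u|$ region.} Split the integral into $|u|\le v^{1-\epsilon}$ and $|u|>v^{1-\epsilon}$. On the small region we use $|S_{j,X}(u)|\ll X^{1/2}$. This is too crude, so instead we rewrite that piece on the spectral side: convolving $h$ with a bump of width $v^{1-\epsilon}$ corresponds to testing $\widehat h$ against pairs with $|\lambda_m-\lambda_n|\lesssim X^{1-\epsilon}$. This is the standard device of \cite[Section~8]{Marklof2003}, here in the form of the weighted mean value of \cite[Proposition~8.5]{GriffinMarklof2019}.

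On the unrestricted pair-sum side the main terms separate cleanly:
\begin{itemize}[leftmargin=2em]
 \item The diagonal $m=n$ contributes $\widehat h(0)X^{-1}\sum_m w^2\psi_1\psi_2(\lambda_m/X)\to d(w^2)\widehat h(0)\int\psi_1\psi_2$, by \eqref{eq:weighted-weyl} applied to $w^2$.
 \item The singular region $|u|\le v^{1-\epsilon}$ produces, via the weighted Weyl law for $w$ and Poisson summation in $m$, the term $d(w)^2\int\widehat h\int\psi_1\psi_2$. Note that $\int\widehat h(s)\,ds=2h(0)$ in this normalization, so only $h$ near $0$ matters there.
\end{itemize}

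\emph{Step 3: the large-$|u|$ region.} For $|u|>v^{1-\epsilon}$, \eqref{eq:unbounded-equid} gives the limit $\int_Y\Theta_f\overline{\Theta_{f'}}\,d\mu_Y\cdot\int h$, summed over components. We compute the Haar integral by unfolding. As remarked after \cref{def:collision-relation}, integrating over the translation variable and the modular variable forces $\ell(m)=\ell(n)$ and $P_0(m)=P_0(n)$. Only bulk collisions survive, and by hypothesis these are just $m=n$. So the Haar integral equals $\int_{\R^2}f\overline{f'}$, which is $d(w^2)\int\psi_1\psi_2$ in suitable normalization. Multiplied by $\int h=\widehat h(0)$, this exactly cancels the diagonal contribution from Step 2, leaving the claimed off-diagonal limit \eqref{eq:smoothed-offdiag}.

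\emph{Main obstacle.} The hardest step is the nonescape and matching near $u=0$. One must check that cutting out $|u|\le v^{1-\epsilon}$ produces exactly the $d(w)^2$ term and no further contribution, with \eqref{eq:majorant-bound} controlling the cusp uniformly over the finitely many rational translates. I would first try to reduce this to \cite[Proposition~8.5]{GriffinMarklof2019} with weight $w$, using the collision hypothesis to discard all nondiagonal resonant terms.
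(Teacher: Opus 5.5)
Your proposal is correct and follows essentially the paper's route. The paper's proof simply invokes the weighted theta mean value of \cite[Proposition~8.5]{GriffinMarklof2019}, subtracts the diagonal, and applies \cref{lem:marklof-nonescape,lem:theta-domination} componentwise over the rational translates supplied by \cref{lem:rational-theta}; your split into the singular window $|u|\le v^{1-\epsilon}$, producing $d(w)^2\int\widehat h\int\psi_1\psi_2$, and the equidistributed remainder, whose Haar integral reproduces and cancels the diagonal, is exactly the content of that mean-value calculation.

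One small correction to Step~2. It is multiplying $h$ by a cutoff of width $v^{1-\epsilon}$, not convolving, that probes pair differences at scale $X^{1-\epsilon}$. That mesoscopic count needs the power-saving lattice-point or Poisson-summation estimate, not just the qualitative weighted Weyl law.
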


\begin{proof}
For the basic affine lattice, this is the weighted theta mean-value calculation
of \cite[Proposition~8.5]{GriffinMarklof2019}, followed by subtraction of the
diagonal; see also \cite[Section~8.6]{Marklof2003}.  By
\cref{lem:rational-theta}, the rational form and congruence coset give finitely
many theta products on finite covers and rational translates.  The Ratner and
nonescape arguments apply to each component with the same estimates by
\cref{lem:marklof-nonescape,lem:theta-domination}.  Summing the components
proves the formula.
\end{proof}

\begin{markloflemma}\label{lem:remove-cutoffs}
Let $\mathcal D\subset\R^2$ be a closed cone such that
$\mathcal D\cap S^1$ is a finite union of closed arcs.  Suppose
\cref{lem:marklof-smoothed} holds for smooth angular weights near
$\mathcal D$, and that every nonidentity collision graph meeting
$\mathcal D\times\mathcal D$ does so only on its boundary.  Put
\[
 M_{\mathcal D}=M\cap\mathcal D,\qquad
 N_{\mathcal D}(X)=\#\{m\in M_{\mathcal D}:\lambda_m\leq X\}.
\]
If $N_{\mathcal D}(X)=DX+o(X)$ with $D>0$, then, for every
$f\in C_c^\infty(\R)$,
\[
 \lim_{X\to\infty}\frac1{N_{\mathcal D}(X)}
 \sum_{\substack{m,n\in M_{\mathcal D}\;m\ne n\\
                  \lambda_m,\lambda_n\leq X}}
 f(\lambda_m-\lambda_n)
 =D\int_\R f(t)\,dt.
\]
\end{markloflemma}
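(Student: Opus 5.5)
We deduce the sharp-cone statement from the smoothed \cref{lem:marklof-smoothed} by a sandwich argument. The argument has three parts: smooth the energy and difference variables, squeeze the sharp cone between smooth angular weights, and then let the smoothing go away.

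\emph{Step 1: smoothing the energy and difference cutoffs.} By linearity and a standard approximation, it suffices to take $f=\widehat h$ with $h\in C_c(\R)$. Such functions are dense in the relevant topology, since $\widehat h$ ranges over Fourier transforms of compactly supported functions. We may also assume $f\geq0$, by splitting into a positive and a negative part with majorants. The sharp cutoff $\lambda_m,\lambda_n\leq X$ is then replaced by $\psi(\lambda_m/X)\psi(\lambda_n/X)$, with $\psi\in C_c^\infty(\R_{>0})$ approximating $\one_{[0,1]}$ from above and below. Two facts make this harmless:
\begin{itemize}
\item the region $\lambda_m\leq\delta X$ contains only $O(\delta X)$ labels;
\item since $f$ has compact support, $|\lambda_m-\lambda_n|$ is bounded on the support, so $\psi(\lambda_m/X)$ and $\psi(\lambda_n/X)$ differ from each other by $o(1)$ on the support.
\end{itemize}
With these cutoffs, the diagonal-type product $\psi_1\psi_2$ appearing in \eqref{eq:smoothed-offdiag} becomes $\psi^2$, and $\int\psi^2\to1$.

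\emph{Step 2: sandwiching the sharp cone.} For $\eta>0$ we choose smooth angular weights $0\leq w_\eta^-\leq\one_{\mathcal D}\leq w_\eta^+\leq1$ (away from $B(0,R_0)$). We require that $\omega^+_\eta$ is supported in the $\eta$-neighbourhood of the arcs $\mathcal D\cap S^1$, and that $\omega^-_\eta=1$ on the arcs shrunk by $\eta$. Since $f\geq0$, the sharp sum is bounded above by the smoothed off-diagonal sum with weight $w^+_\eta$, and below by the one with weight $w^-_\eta$, up to the pairs discussed next.

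The hypothesis of \cref{lem:marklof-smoothed} requires that the diagonal be the only bulk collision on $\operatorname{supp}(w)^2$. This fails for $w_\eta^+$ exactly when a nonidentity collision graph meets $\mathcal D\times\mathcal D$ on its boundary, because the enlarged support then picks up collisions near the walls. The way around it is to split: we apply \cref{lem:marklof-smoothed} on the interior part, where $w^-_\eta$ or a smooth weight supported strictly inside $\mathcal D$ is used. The collisions near the boundary lie in an $\eta$-sector neighbourhood of $\partial\mathcal D$. Their contribution is at most
\[
 \|f\|_\infty\cdot\#\{m:\ \text{$m$ in the $\eta$-neighbourhood of }\partial\mathcal D,\ \lambda_m\leq X\}\cdot(\text{bounded multiplicity}),
\]
because each collision graph is the graph of a map. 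By the weighted Weyl law \eqref{eq:weighted-weyl}, this count is $O(\eta X)$. All other pairs $m\neq n$ near the boundary, with distinct energies, are controlled by the upper bound obtained from applying \cref{lem:marklof-smoothed} to a weight supported in the $\eta$-neighbourhood of $\partial\mathcal D$ after removing the graph pairs. That bound is $O(d(w_\eta^{\partial})^2X/d)$, which is again $O(\eta X)$.

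\emph{Step 3: passing to the limit.} Taking the limit in $X$ via \eqref{eq:smoothed-offdiag}, the sharp sum divided by $X$ lies between $d(w^-_\eta)^2\int f$ and $d(w^+_\eta)^2\int f+O(\eta)$. As $\eta\to0$, the sandwich argument applied to \eqref{eq:weighted-weyl} together with $N_{\mathcal D}(X)=DX+o(X)$ gives $d(w^\pm_\eta)\to D$. Dividing by $N_{\mathcal D}(X)\sim DX$ yields the limit $D\int f$.

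The main obstacle is the boundary step: showing that collision graphs touching $\partial\mathcal D$ contribute $O(\eta X)$ uniformly in $X$. This needs both the counting bound near the walls, from the Weyl law for thin sectors, and the fact that removing the finitely many collision graphs does not spoil \cref{lem:marklof-smoothed} on the thin boundary weight. I would first try to handle this by a crude upper bound using the smoothed theorem for $w^\partial_\eta$ with the graph pairs counted separately.
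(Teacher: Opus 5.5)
Your lower bound is fine, since $w^-_\eta$ is supported strictly inside $\mathcal D$ and is therefore collision-free. When no nonidentity collision graph touches $\mathcal D\times\mathcal D$ at all (the cases $r=1,2$), $w^+_\eta$ is also collision-free for small $\eta$, and your sandwich closes.

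The gap is the case the lemma is really about: a wall fixed by a collision graph, such as $v=0$ under $(u,v)\mapsto(u,-v)$ when $r=0$. Any smooth weight that is $\geq1$ on $\mathcal D$ near that wall is positive on both sides of it. So $w^+_\eta$ and every boundary weight $w^\partial_\eta$ contain graph pairs with $\lambda_m=\lambda_n$.

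You propose to apply \cref{lem:marklof-smoothed} to $w^\partial_\eta$ ``after removing the graph pairs'', but no such statement is available. The lemma's hypothesis excludes exactly these weights, and for them its conclusion is false: the graph pairs add a term $c(w)\widehat h(0)\int\psi_1\psi_2$ with $c(w)>0$. Subtracting them by hand would need a collision-corrected version of the smoothed lemma, which you do not have.

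Independently, the mixed pairs (one point in $\operatorname{supp}w^-_\eta$, one in the boundary strip) are not covered by applying the lemma to $w^-_\eta$ and $w^\partial_\eta$ separately. The lemma only treats $w(m)w(n)$ with a single weight, and polarizing brings back $w^+_\eta$. So the upper bound $d(w^+_\eta)^2\int f+O(\eta)$ in Step~3 is unproved; your last paragraph names the obstacle but does not remove it.

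The missing idea, which is the paper's route, is to bound the boundary terms by Cauchy--Schwarz rather than evaluate them. Choose $h\geq0$ with $\widehat h\geq0$ (a Fej\'er-type kernel). By \cref{lem:theta-identity} every pair sum is then $\int S_a\overline{S_b}\,h\,du$, and
\[
 \Bigl|\int S_{w^-_\eta}\overline{S_{w^\partial_\eta}}\,h\,du\Bigr|
 \leq\Bigl(\int|S_{w^-_\eta}|^2h\,du\Bigr)^{1/2}
 \Bigl(\int|S_{w^\partial_\eta}|^2h\,du\Bigr)^{1/2}.
\]
The first factor is $O(1)$ by \cref{lem:marklof-smoothed}, applied to the collision-free weight $w^-_\eta$. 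The second factor is a full mean square, diagonal and graph pairs included, and it needs only an upper bound. Its limit is governed by the theta equidistribution of \cref{lem:marklof-nonescape}, which carries no collision hypothesis. The resulting Haar mean, together with the $u\approx0$ peak, is $O(\eta)$ because the weight has angular width $O(\eta)$.

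Hence every boundary pair term is $O_f(\eta^{1/2})$; this is why the paper obtains $\eta^{1/2}$ rather than your $\eta$. Letting $\eta\to0$ then completes the argument.
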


\begin{proof}
Approximate the angular and radial indicators from above and below by smooth
weights.  The weighted Weyl law makes the boundary-strip and annular errors
$O(\eta)$; Cauchy--Schwarz and \cref{lem:marklof-smoothed} make every
corresponding pair term $O_f(\eta^{1/2})$.  Letting $\eta\to0$ proves the
claim.  This is the standard approximation argument of
\cite[Section~8.6 and Appendix~A.6]{Marklof2003}.
\end{proof}

\section{A sector version of the inhomogeneous quadratic-form theorem}
\label{sec:marklof}

We isolate the homogeneous-dynamics input in the exact form required by the
Sutherland spectrum.  This makes explicit the three points not literally
present in the most familiar statement of Marklof's theorem: the rational
coefficient $3$, the congruence coset, and the Weyl sector.

For $r\in\Z/3\Z$, put
\begin{align*}
 \Lambda_r&=\{(u,v)\in\Z^2:u\equiv v\pmod2,
                    \ v\equiv r\pmod3\},\\
 \cC&=\{(u,v)\in\R^2:u\geq|v|\},\qquad
 \cC^+=\{(u,v)\in\R^2:u\geq v\geq0\}.
\end{align*}
For $z=(u,v)$ define
\[
 \xi_g(z)=\frac1{12}\bigl(3(u+2g)^2+v^2\bigr).
\]
If $M\subset\Z^2$ and $\xi:M\to\R_{>0}$, write
$N_M(X)=\#\{z\in M:\xi(z)\leq X\}$.  We say that its values have
\emph{pair-correlation intensity $D$} if, for every $F\in C_c^\infty(\R)$,
\begin{equation}\label{eq:intensity-definition}
 \lim_{X\to\infty}\frac1{N_M(X)}
 \sum_{\substack{z,z'\in M\\z\ne z',\ \xi(z),\xi(z')\leq X}}
 F(\xi(z)-\xi(z'))=D\int_\R F(t)\,dt.
\end{equation}

\begin{proposition}\label{prop:sector-marklof}
Let $g$ be irrational and Diophantine of finite type.
For $r=1,2$, the values $\xi_g(z)$ with $z\in\Lambda_r\cap\cC$ have Poissonian
pair correlation of intensity $\pi/(3\sqrt3)$.  For $r=0$, the values with
$z\in\Lambda_0\cap\cC^+$ have Poissonian pair correlation of intensity
$\pi/(6\sqrt3)$.
\end{proposition}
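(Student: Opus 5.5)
The plan is to apply \cref{lem:remove-cutoffs} with $M=\Lambda_r$, $\lambda_z=\xi_g(z)$, and $\mathcal D=\cC$ for $r=1,2$ or $\mathcal D=\cC^+$ for $r=0$.

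\emph{Matching the setting of \cref{lem:rational-theta}.} Write
\[
 12\,\xi_g(u,v)=3(u+2g)^2+v^2=(z+\boldsymbol\alpha)^tA(z+\boldsymbol\alpha),
 \qquad A=\operatorname{diag}(3,1),\quad \boldsymbol\alpha=(2g,0),
\]
so the shift has irrational first coordinate $\alpha=2g$ and rational second coordinate $\beta=0$. The set $\Lambda_r$ is a coset $c+L$ of the index-$6$ sublattice $L=\{u\equiv v \ (2),\ v\equiv0\ (3)\}$. The overall factor $1/12$ is absorbed by rescaling $\tau$. Thus \cref{lem:rational-theta} applies, and $2g$ is Diophantine of finite type whenever $g$ is.

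\emph{Collision hypotheses.} In the notation of \cref{def:collision-relation}, $12\xi_g=P_0+2g\,\ell+12g^2$ with $\ell(u,v)=6u$ and $P_0=3u^2+v^2$. Since $g$ is irrational, a bulk collision means $u=u'$ and $v^2=v'^2$, so the only nonidentity collision graph is the reflection $\sigma(u,v)=(u,-v)$.
\begin{itemize}[leftmargin=*]
\item For $r=1,2$: $\sigma$ sends $\Lambda_r$ to $\Lambda_{-r}\ne\Lambda_r$, so its graph does not meet $M\times M$ at all. Hence the diagonal is the only collision, for any weight $w$.
\item For $r=0$: $\Lambda_0$ is $\sigma$-invariant. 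The graph meets $\cC^+\times\cC^+$ only where $v=-v$, that is, on the boundary ray $v=0$, which is what \cref{lem:remove-cutoffs} permits.
\end{itemize}
For the smoothed statement \cref{lem:marklof-smoothed}, with $r=0$, I would take angular weights supported in a small neighbourhood of $\cC^+$. Such a support meets its own $\sigma$-image only in a thin wedge around $v=0$. This is exactly why \cref{lem:remove-cutoffs} allows boundary collisions: the approximation there controls that wedge by the weighted Weyl law.

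\emph{Weighted Weyl law \eqref{eq:weighted-weyl}.} This follows from the Lipschitz principle, as in \cref{lem:weyl}: lattice points of the coset $\Lambda_r$, counted with a degree-zero-homogeneous weight, inside the ellipse $\xi_g\le X$. The density is
\[
 d(w)=\frac{1}{6}\int w\,\mathbf 1_{\{\xi_0\le1\}}\,dz,
\]
computed from the area of the region $\xi_0\le 1$ weighted by $w$, where $6$ is the index of $L$.

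\emph{Intensity.} Compute $D$ directly. The region $\{3u^2+v^2\le12X\}$ is an ellipse of area $\pi\cdot 12X/\sqrt3=4\sqrt3\,\pi X$. In the metric of $A$, the cone $\cC$ is bounded by the rays $(1,\pm1)$. These have $A$-norm $2$ and $A$-inner product $3-1=2$, so the angle between them is $\arccos(1/2)=\pi/3$, and $\cC$ occupies one sixth of the ellipse. Dividing by the index,
\[
 D=\frac{4\sqrt3\,\pi}{6\cdot6}=\frac{\pi}{3\sqrt3}.
\]
$\cC^+$ is half of $\cC$, giving $\pi/(6\sqrt3)$. This is consistent with \cref{lem:weyl} via the bijection \eqref{eq:uv}.

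Finally, \cref{lem:remove-cutoffs} gives the limit $D\int F$, which is the claimed intensity. The main obstacle is justifying \cref{lem:marklof-smoothed} for weights near $\cC^+$ when $r=0$. There the Haar mean-value computation includes a genuine reflection term supported on the near-diagonal wedge. I would show that its contribution is $O(\eta)$, where $\eta$ is the wedge's angular width, and let $\eta\to0$, following \cite[Appendix~A]{Marklof2003}.
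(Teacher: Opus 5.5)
Your proposal is correct and follows essentially the same route as the paper. Both use the same rational data $A=\operatorname{diag}(3,1)$, index-$6$ lattice $L$ and shift $\boldsymbol\alpha=(2g,0)$, the Diophantine check on $2g$, the weighted Weyl law via the Lipschitz principle, and then \cref{lem:marklof-smoothed} followed by \cref{lem:remove-cutoffs}.

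The differences are minor and, if anything, favour your version. You compute $D$ directly from the $A$-angle $\pi/3$ of $\cC$ instead of via the bijection with $(p,q)$ and \cref{lem:weyl}. For $r=0$ you also explicitly account for the $O(\eta)$ reflection term picked up by upper angular weights that straddle the wall $v=0$. The paper dispatches this point only by remarking that the wall itself carries $O_g(X^{1/2})$ labels.
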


\begin{proof}
Set
\[
 L=\{(u,v)\in\Z^2:u\equiv v\pmod2, v\equiv0\pmod3\},
 \qquad A=\begin{pmatrix}3&0\\0&1\end{pmatrix},
 \qquad \boldsymbol\alpha=(2g,0).
\]
For $c_r=(r,r)$, where $r$ is represented by $0,1$, or $2$, one has
$\Lambda_r=c_r+L$.  Indeed, subtraction of $c_r$ makes the second coordinate
divisible by three and preserves the parity relation, and the converse is
immediate.  Moreover,
\begin{equation}\label{eq:xi-rational-data}
 12\xi_g(z)=(z+\boldsymbol\alpha)^tA(z+\boldsymbol\alpha).
\end{equation}
Thus $A$, $L$, and $c_r$ satisfy the hypotheses of
\cref{lem:rational-theta}.  The factor $12$ only rescales $u$ in
\cref{eq:theta-pair-identity}; it neither changes automorphy nor the
nonescape estimate.

Let $\chi$ be a smooth angular function supported in the interior of $\cC$
or $\cC^+$, and let $\psi_1,\psi_2\in C_c^\infty(\R_{>0})$.  Apply
\cref{lem:theta-identity} to the sums
\[
 X^{-1/2}\sum_{z\in\Lambda_r}
 \chi(z+\boldsymbol\alpha)
 \psi_j(\xi_g(z)/X)\e(u\xi_g(z)/2).
\]
By \eqref{eq:xi-rational-data} and \cref{lem:rational-theta}, their product is
a finite sum of automorphic theta products on finite covers of the quotient
$Y$ defined above.  The irrational translation coordinate is $2g$.  To check
the Diophantine condition explicitly, if $a\in\Z$ and $b\geq1$, then
\[
 \left|2g-\frac ab\right|
 =2\left|g-\frac{a}{2b}\right|
 \geq2c(2b)^{-\kappa}.
\]
Hence $2g$ is Diophantine of the same finite type as $g$.
By \cref{lem:marklof-nonescape,lem:theta-domination}, the theta products are
therefore uniformly integrable and equidistribute on the explicit
finite-volume $H$-quotients.

By \cref{lem:collision}, the sector restriction removes every nontrivial
collision away from the fixed wall $p=q$, and that wall contains only
$O_g(X^{1/2})$ labels by \cref{lem:weyl}, so its contribution is absorbed by
the error term.

The bijection $(p,q)\mapsto(u,v)=(p+q,p-q)$ and \cref{lem:weyl} give
$D=\pi/(3\sqrt3)$ for $r=1,2$ and $D=\pi/(6\sqrt3)$ for $r=0$.

For a smooth angular cutoff $\chi$, the Lipschitz principle used in
\cref{lem:weyl} proves \eqref{eq:weighted-weyl}; denote the resulting
weighted density by $D_\chi=d(\chi)$.  All hypotheses of
\cref{lem:marklof-smoothed} have now been verified.  Its off-diagonal
conclusion says
\[
 \lim_{X\to\infty}\frac1{D_\chi X}
 \sum_{z\ne z'}
 \chi(z+\boldsymbol\alpha)\chi(z'+\boldsymbol\alpha)
 \psi_1(\xi_g(z)/X)\psi_2(\xi_g(z')/X)
 \widehat h(\xi_g(z)-\xi_g(z'))
 =D_\chi\int_\R\widehat h\int_0^\infty\psi_1\psi_2.
\]
Finally, \cref{lem:remove-cutoffs} replaces the smooth weights by the sharp
sector and replaces the radial weights by the conditions
$\xi_g(z),\xi_g(z')\leq X$.  Since $D_\chi\to D$ for the upper and lower
angular approximations and $N_M(X)/(DX)\to1$, the resulting formula is
exactly \eqref{eq:intensity-definition}.  The theta weight is naturally a
function of $z+\boldsymbol\alpha$, whereas the sharp cone condition is imposed
on $z$.  The symmetric difference between these two cone conditions lies in
a strip of width $O_g(1)$ along the boundary rays.  Below energy $X$ the
strip has area $O_g(X^{1/2})$, hence relative area $O_g(X^{-1/2})$; the
discrepancy estimate in \cref{lem:remove-cutoffs} shows that it contributes
zero to the normalized pair limit.  This proves the proposition.
\end{proof}

\begin{remark}\label{rem:why-dioph}
The Diophantine assumption is used only in \eqref{eq:majorant-bound}.
Equation \eqref{eq:bounded-equid} for bounded continuous observables requires
only irrationality.  Pair correlation is represented by the unbounded theta
product, and a Liouville sequence of exceptionally short affine-lattice
vectors can carry mass into the cusp.  This is why irrationality alone is not
enough for the present proof.
\end{remark}

\section{Proof of the main theorem}\label{sec:proof}

\begin{proof}[Proof of \cref{thm:main}]
By \cref{lem:relative-spectrum}, the map
$(p,q)\mapsto E_{p,q}=Q_g(p,q)/3$ gives every relative eigenvalue in the
fixed momentum sector.  The change of variables \eqref{eq:uv} is invertible
on its image, with inverse
\[
 p=\frac{u+v}{2},\qquad q=\frac{u-v}{2}.
\]
The parity and cone conditions in \eqref{eq:uv-lattice} are exactly the
conditions that these two numbers are nonnegative integers.  The momentum
congruence becomes $v\equiv K\pmod3$.  Hence the label set is in bijection
with $\Lambda_{K\bmod3}\cap\cC$, and the identity
$4Q_g=3(u+2g)^2+v^2$ shows that its energy is $\xi_g(u,v)$.

When $K\equiv0\pmod3$, the reflection $(p,q)\mapsto(q,p)$ becomes
$(u,v)\mapsto(u,-v)$.  Choosing the representative $p\geq q$ is therefore
equivalent to imposing $v\geq0$, namely to restricting to $\cC^+$.  By
\cref{lem:collision}, no two distinct retained labels have the same energy.
Thus the sets in \cref{prop:sector-marklof} are precisely the desymmetrized
spectra $\mathcal E_{K,g}$.

The Weyl law \eqref{eq:intro-weyl} is \cref{lem:weyl}.  Let
$f\in C_c^\infty(\R)$ and put $F(x)=f(\rho_Kx)$.  By the definition
\eqref{eq:intensity-definition} and \cref{prop:sector-marklof},
\[
 \lim_{X\to\infty}R_{K,g}(X,f)=
 \rho_K\int_\R F(x)\,dx
 =\rho_K\int_\R f(\rho_Kx)\,dx
 =\int_\R f(y)\,dy,
\]
where the last equality uses $y=\rho_Kx$.  This proves both assertions.
\end{proof}

\par\medskip\noindent\hrulefill\par

\subsection*{Acknowledgments}
The author thanks Jens Marklof for helpful comments on an earlier draft, in
particular for pointing out the relevance of
\cite[Proposition~8.5]{GriffinMarklof2019} to the weighted argument.

\vspace{0.25\baselineskip}

\subsection*{AI use declaration}
Generative artificial intelligence, ChatGPT (OpenAI), was used in developing
and checking preliminary proof strategies, verifying calculations and logical
consistency, revising the exposition, checking bibliographic information, and
assisting with LaTeX preparation. All mathematical arguments, statements, and
citations in the final manuscript were independently evaluated and verified by
the author. The author made every final decision concerning the content and
assumes full responsibility for the correctness and originality of the paper.

\bibliographystyle{alpha}
\bibliography{biblio}

\newcommand{\etalchar}[1]{$^{#1}$}
\begin{thebibliography}{MnFM{\etalchar{+}}06}

\bibitem[BF97]{BakerForrester1997}
T.~H. Baker and P.~J. Forrester.
\newblock The {Calogero--Sutherland} model and generalized classical
  polynomials.
\newblock {\em Communications in Mathematical Physics}, 188(1):175--216, 1997.

\bibitem[BS98]{BerndtSchmidt1998}
R.~Berndt and R.~Schmidt.
\newblock {\em Elements of the Representation Theory of the {Jacobi} Group},
  volume 163 of {\em Progress in Mathematics}.
\newblock Birkh\"auser, Basel, 1998.

\bibitem[BT77]{BerryTabor1977}
M.~V. Berry and M.~Tabor.
\newblock Level clustering in the regular spectrum.
\newblock {\em Proceedings of the Royal Society of London. Series A},
  356:375--394, 1977.

\bibitem[EMM05]{EskinMargulisMozes2005}
A.~Eskin, G.~Margulis, and S.~Mozes.
\newblock Quadratic forms of signature $(2,2)$ and eigenvalue spacings on
  rectangular $2$-tori.
\newblock {\em Annals of Mathematics}, 161(2):679--725, 2005.

\bibitem[GM19]{GriffinMarklof2019}
J.~Griffin and J.~Marklof.
\newblock Quantum transport in a low-density periodic potential: homogenisation
  via homogeneous flows.
\newblock {\em Pure and Applied Analysis}, 1(4):571--614, 2019.

\bibitem[Hal24]{Hallnas2024}
M.~Halln\"as.
\newblock Calogero--moser--sutherland systems, 2024.

\bibitem[KMW26]{KimMarklofWelsh2026}
W.~Kim, J.~Marklof, and M.~Welsh.
\newblock Values of ternary quadratic forms at integers and the {Berry--Tabor}
  conjecture for $3$-tori, 2026.
\newblock \href{https://arxiv.org/abs/2601.03209}{arXiv:2601.03209 [math.NT]}.

\bibitem[LMW23]{LindenstraussMohammadiWang2023}
E.~Lindenstrauss, A.~Mohammadi, and Z.~Wang.
\newblock Quantitative equidistribution and the local statistics of the
  spectrum of a flat torus.
\newblock {\em Journal d'Analyse Math\'ematique}, 151(1):181--234, 2023.

\bibitem[LV80]{LionVergne1980}
G.~Lion and M.~Vergne.
\newblock {\em The {Weil} Representation, {Maslov} Index and Theta Series},
  volume~6 of {\em Progress in Mathematics}.
\newblock Birkh\"auser, Boston, 1980.

\bibitem[Man07]{Mangazeev2007}
V.~V. Mangazeev.
\newblock An analytic formula for the $a_2$ {Jack} polynomials.
\newblock {\em SIGMA}, 3:014, 2007.

\bibitem[Mar02]{Marklof2002}
J.~Marklof.
\newblock Pair correlation densities of inhomogeneous quadratic forms. {II}.
\newblock {\em Duke Mathematical Journal}, 115(3):409--434, 2002.

\bibitem[Mar03a]{MarklofCorrection2003}
J.~Marklof.
\newblock Correction to ``pair correlation densities of inhomogeneous quadratic
  forms. {II}''.
\newblock {\em Duke Mathematical Journal}, 120(1):227--228, 2003.

\bibitem[Mar03b]{Marklof2003}
J.~Marklof.
\newblock Pair correlation densities of inhomogeneous quadratic forms.
\newblock {\em Annals of Mathematics}, 158(2):419--471, 2003.

\bibitem[MnFM{\etalchar{+}}06]{MunozEtAl2006}
L.~Mu\~noz, E.~Faleiro, R.~A. Molina, A.~Rela\~no, and J.~Retamosa.
\newblock Spectral statistics in noninteracting many-particle systems.
\newblock {\em Physical Review E}, 73(3):036202, 2006.

\bibitem[OP83]{OlshanetskyPerelomov1983}
M.~A. Olshanetsky and A.~M. Perelomov.
\newblock Quantum integrable systems related to {Lie} algebras.
\newblock {\em Physics Reports}, 94(6):313--404, 1983.

\bibitem[PZB{\etalchar{+}}93]{PoilblancEtAl1993}
D.~Poilblanc, T.~Ziman, J.~Bellissard, F.~Mila, and G.~Montambaux.
\newblock Poisson versus {GOE} statistics in integrable and non-integrable
  quantum hamiltonians.
\newblock {\em Europhysics Letters}, 22(7):537--542, 1993.

\bibitem[Sar97]{Sarnak1997}
P.~Sarnak.
\newblock Values at integers of binary quadratic forms.
\newblock In {\em Harmonic Analysis and Number Theory (Montreal, 1996)},
  volume~21 of {\em CMS Conference Proceedings}, pages 181--203. American
  Mathematical Society, Providence, RI, 1997.

\bibitem[SIB21]{SekkouriIzrailevBorgonovi2021}
S.~M. Sekkouri, F.~Izrailev, and F.~Borgonovi.
\newblock Spectrum statistics in the integrable {Lieb--Liniger} model.
\newblock {\em Physical Review E}, 104(3):034212, 2021.

\bibitem[Sut71]{Sutherland1971}
B.~Sutherland.
\newblock Exact results for a quantum many-body problem in one dimension.
\newblock {\em Physical Review A}, 4(5):2019--2021, 1971.

\bibitem[Sut72]{Sutherland1972}
B.~Sutherland.
\newblock Exact results for a quantum many-body problem in one dimension. {II}.
\newblock {\em Physical Review A}, 5(3):1372--1376, 1972.

\bibitem[SV20]{StrombergssonVishe2020}
A.~Str\"ombergsson and P.~Vishe.
\newblock An effective equidistribution result for
  {${\operatorname{SL}(2,\mathbb{R})\ltimes(\mathbb{R}^2)^{\oplus k}}$} and
  application to inhomogeneous quadratic forms.
\newblock {\em Journal of the London Mathematical Society}, 102(1):143--204,
  2020.

\bibitem[UKR23]{UrbinaKellyRichter2023}
J.~D. Urbina, M.~Kelly, and K.~Richter.
\newblock Periodic orbit theory of {Bethe}-integrable quantum systems: an
  $n$-particle {Berry--Tabor} trace formula.
\newblock {\em Journal of Physics A: Mathematical and Theoretical},
  56(21):214001, 2023.

\end{thebibliography}

\end{document}